\theoremstyle{plain}
\newtheorem{thm}{Theorem}[section]
\newtheorem{prop}[thm]{Proposition}
\newtheorem{lem}[thm]{Lemma}
\newtheorem{cor}[thm]{Corollary}
\newtheorem{conj}[thm]{Conjecture}
\newtheorem{rmk}[thm]{Remark}
\DeclareMathOperator{\codim}{codim}
\DeclareMathOperator{\Sing}{Sing}
\DeclareMathOperator{\Bir}{Bir}
\DeclareMathOperator{\Aut}{Aut}
\DeclareMathOperator{\id}{id}
\title{On endomorphisms of varieties which are injective on open subsets}
\author{Takumi Asano}
\date{}
\begin{document}

\maketitle

\begin{abstract}
We consider conditions under which endomorphisms of varieties become automorphisms. For example, there is a remarkable theorem, called Ax-Grothendieck theorem, which states that any injective endomorphism of a variety is bijective. Over an algebraically closed field of characteristic zero, bijectivity of endomorphisms of varieties implies that the endomorphisms are automorphisms, thus Ax-Grothendieck theorem gives one of the conditions we considering. There is also a conjecture, called Miyanishi conjecture, which claims that for any endomorphism of a variety over an algebraically closed field of characteristic zero, if it is injective outside a closed subset of codimension at least $2$, then it is an automorphism. Recently, I. Biswas and N. Das prove that any endomorphism which satisfies the conditions of Miyanishi conjecture induces an automorphism of the singular locus of the variety with some conditions. In this paper, we prove that Miyanishi conjecture holds for any threefold which satisfies the conditions of I. Biswas and N. Das. For higher dimensional varieties, we also observe how divisorial contractions affect endomorphisms by using minimal model program theory. We can prove that Miyanishi conjecture holds for any open subset of a projective variety which has a sequence of divisorial contractions to the canonical model or a birationally superrigid Mori fiber space.
\end{abstract}

\section{Introduction}
We work over an algebraically closed field $k$ of characteristic zero, and varieties over $k$ mean separated integral schemes of finite type over $k$. We call $1$-dimensional varieties curves.

J. Ax proved the following theorem in \cite{Ax}.

\begin{thm}[{\cite[Theorem]{Ax}}]\label{A}
Let $A$ and $B$ be schemes such that $A$ is of finite type over $B$. For any endomorphism $\varphi$ of $A$ over $B$, if $\varphi$ is injective, then $\varphi$ is bijective.
\end{thm}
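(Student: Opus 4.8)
The plan is to prove this classical Ax--Grothendieck statement by reducing it to the case of varieties over finite fields, where an injective self-map of a finite set is automatically surjective by the pigeonhole principle. Since surjectivity of $\varphi$ can be tested on points and is local on $B$, I would first reduce to the situation where $B = \operatorname{Spec} k$ for an algebraically closed field $k$ and $A$ is of finite type over $k$. Covering $A$ by finitely many affine charts, a hypothetical counterexample becomes a purely polynomial object: finitely many polynomials cutting out the charts of $A$, finitely many polynomials defining $\varphi$, and the coordinates of a single point $a \in A$ lying outside the image $\varphi(A)$.

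Next I would spread out. All the coefficients appearing in this finite configuration generate a finitely generated $\mathbb{Z}$-subalgebra $R \subset k$, over which the entire setup is defined as a scheme $A_R$ with an endomorphism $\varphi_R$. By Chevalley's theorem the image $\varphi_R(A_R)$ is constructible, so both the condition ``$\varphi$ fails to be injective'' (nonexistence of a pair of distinct points with equal image) and the condition ``$a \notin \varphi(A)$'' are cut out by constructible loci. Over the generic fiber the non-injective locus is empty and the complement of the image is nonempty; since the image in $\operatorname{Spec} R$ of a constructible set is constructible, each of these properties persists over a dense open subset of $\operatorname{Spec} R$.

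I would then specialize. Because $R$ is a finitely generated $\mathbb{Z}$-algebra it is a Jacobson ring, so it has a maximal ideal $\mathfrak{m}$, lying in the dense open chosen above, whose residue field $R/\mathfrak{m}$ is a finite field $\mathbb{F}_q$. Reduction modulo $\mathfrak{m}$ then yields an affine scheme over $\mathbb{F}_q$ with an endomorphism $\varphi_{\mathbb{F}_q}$ that is still injective and still misses a point over $\overline{\mathbb{F}_q}$. But over $\overline{\mathbb{F}_q} = \bigcup_n \mathbb{F}_{q^n}$ the map $\varphi_{\mathbb{F}_q}$, being defined by polynomials over $\mathbb{F}_q$, preserves each finite set of $\mathbb{F}_{q^n}$-points; injectivity on this finite set forces surjectivity onto it, and taking the union over all $n$ shows $\varphi_{\mathbb{F}_q}$ is surjective on $\overline{\mathbb{F}_q}$-points, contradicting the missing point. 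Hence no counterexample can exist in characteristic zero.

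I expect the main obstacle to be the specialization bookkeeping in the middle step: one must guarantee that \emph{both} injectivity and non-surjectivity survive the passage to a finite residue field. The point is that each is governed by a constructible locus whose behavior over the generic fiber propagates to a dense open subset of primes, so the delicate part is simply choosing the maximal ideal $\mathfrak{m}$ inside the intersection of these good loci while keeping the residue field finite. An alternative that packages all of these reductions cleanly is the model-theoretic route via the Lefschetz principle: the assertion is a first-order scheme of sentences in the language of fields, it holds in every $\overline{\mathbb{F}_p}$ by the pigeonhole argument above, and the completeness of the theory of algebraically closed fields of each fixed characteristic transfers it to characteristic zero.
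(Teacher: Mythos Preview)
The paper does not supply its own proof of this theorem; Theorem~\ref{A} is quoted from \cite{Ax} purely as background and motivation for Conjecture~\ref{M}, and nothing in the body of the paper revisits it. There is therefore no argument in the paper to compare yours against.

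For what it is worth, your sketch is the standard Ax--Grothendieck argument and is essentially sound: spread the data out over a finitely generated $\mathbb{Z}$-algebra, use constructibility (Chevalley) to push both injectivity and the hypothetical failure of surjectivity down to a closed point with finite residue field, and finish by the pigeonhole principle on $\mathbb{F}_{q^n}$-points. One small point worth tightening: your very first reduction replaces $B$ by $\operatorname{Spec} k$ with $k$ algebraically closed, but injectivity on underlying topological spaces is not a priori stable under base change to an algebraic closure (that stability is the \emph{radiciel} condition, which also requires the residue field extensions to be purely inseparable). The clean fix is to reduce instead to the genuine fibers $A_b$ over scheme-theoretic points $b\in B$, where injectivity restricts for free since $A_b\hookrightarrow A$ on underlying sets, and then spread out from the residue field $\kappa(b)$ without ever passing to its closure; the pigeonhole step over $\mathbb{F}_q$ already works directly on closed points of bounded residue degree, so no algebraic closure is needed at that end either.
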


The following generalization of Theorem \ref{A} is conjectured by M. Miyanishi in \cite{Open}.

\begin{conj}[Miyanishi conjecture, \cite{Open}]\label{M}
Let $\varphi:X \to X$ be an endomorphism of a variety $X$ over $k$, and let $Y$ be a closed subset of $X$ which is of codimension at least $2$. If $\varphi$ is injective on $X \setminus Y$, then $\varphi$ is an automorphism.
\end{conj}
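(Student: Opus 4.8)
The plan is to reduce the conjecture to a rigidity statement about \emph{birational} self-maps and then to use minimal model program theory to reach a model on which that rigidity is available. First I would observe that $\varphi$ is dominant, since it is injective on the dense open set $X \setminus Y$, so it has a well-defined generic degree $d = [k(X) : \varphi^* k(X)]$. I claim $d = 1$. Indeed, $\varphi(Y)$ has dimension at most $\dim Y \le \dim X - 2$, hence is not dense, so for a general closed point $y \in X$ the fibre $\varphi^{-1}(y)$ consists of exactly $d$ points, all lying in $X \setminus Y$. If $d \ge 2$ these would be two distinct points of $X \setminus Y$ with the same image, contradicting injectivity on $X \setminus Y$. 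Therefore $d = 1$ and $\varphi$ is birational. (After reducing to the normalization, which a dominant endomorphism lifts to and on which the hypothesis on $Y$ is preserved, one may assume $X$ is normal.)

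Next I would promote ``birational'' to ``isomorphism'' by ruling out contracted divisors. Suppose an irreducible divisor $D \subset X$ is contracted, i.e.\ $\dim \varphi(D) < \dim D$. Since $\codim D = 1 < 2 \le \codim Y$, the divisor $D$ is not contained in $Y$, so $D \setminus Y$ is dense in $D$ and its image still has dimension $< \dim D$; hence some fibre of $\varphi|_{D \setminus Y}$ contains two points, again contradicting injectivity on $X \setminus Y$. Thus the exceptional locus $\operatorname{Exc}(\varphi)$ of the birational morphism $\varphi$ contains no divisor. When $X$ is normal, projective and $\mathbb{Q}$-factorial, $\operatorname{Exc}(\varphi)$ is pure of codimension one, so it must be empty; then $\varphi$ is quasi-finite and proper, hence finite, and a finite birational morphism onto the normal variety $X$ is an isomorphism by Zariski's main theorem. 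Equivalently, one could at this point argue that $\varphi$ is injective on all of $X$ and invoke Theorem \ref{A} to conclude bijectivity, hence an automorphism. This settles the $\mathbb{Q}$-factorial projective case.

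For a general $X$ the plan is to transport $\varphi$ to such a favourable model through the minimal model program: replace $X$ by a projective $\mathbb{Q}$-factorial model and carry $\varphi$ along a sequence of divisorial contractions to a canonical model, or to a birationally superrigid Mori fibre space, where the only birational self-maps are automorphisms, so the rigidity of the previous paragraph applies. The main obstacle is precisely this descent: one must show that the hypothesis of injectivity outside codimension two is preserved under each contraction and that $\varphi$ genuinely descends to an endomorphism of the next model, using that $\varphi$ contracts no divisor to control its indeterminacy. Making this work for arbitrary singular, non-$\mathbb{Q}$-factorial $X$ is delicate, and the behaviour along $\Sing(X)$ is especially subtle, since the available input there only yields an automorphism of $\Sing(X)$ rather than of $X$ itself. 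I expect this step to be the crux, and it is what confines a complete argument to threefolds or to varieties admitting a good tower of contractions.
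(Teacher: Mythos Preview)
The statement you are addressing is Conjecture~\ref{M}, and the paper does \emph{not} prove it; it is open in general, and the paper only establishes the special cases recorded in Theorems~\ref{main1}--\ref{main3} and Corollary~\ref{3fld}. So there cannot be a complete ``proof proposal'' to compare against, and indeed your write-up is an outline rather than a proof. Your first two steps are correct and match the paper: birationality of $\varphi$ is exactly Lemma~\ref{bir1}, and the observation that no prime divisor is contracted (so one may take $Y=A$ as in the proof of Lemma~\ref{YZSing}) is used throughout. Your deduction that the conjecture holds when $X$ is normal, projective and $\mathbb{Q}$-factorial is also fine, but this is the already-known complete case (\cite{Kal}); it is not the difficulty.

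The genuine gap is your last paragraph. Passing to a projective $\mathbb{Q}$-factorial model $\overline{X}$ turns $\varphi$ into a \emph{birational} self-map $\overline{\varphi}$, not an endomorphism, so ``the rigidity of the previous paragraph'' no longer applies: you cannot argue that $\operatorname{Exc}(\overline{\varphi})$ is divisorial and hence empty, because $\overline{\varphi}$ may have indeterminacies and may contract divisors lying in $\overline{X}\setminus X$. The paper's Theorem~\ref{main3} handles exactly this situation, but by a different mechanism: one assumes a chain of purely divisorial contractions $\overline{X}\to\cdots\to\overline{X}_n$ with $\Bir(\overline{X}_n)=\Aut(\overline{X}_n)$, takes a putative contracted curve $C\subset X$, and uses discrepancy/intersection computations (Propositions~\ref{nKn} and~\ref{nEp}) to show that $f(\overline{C})$ survives as a curve on $\overline{X}_n$, where it would have to be contracted by the automorphism $\overline{\varphi}_n$, a contradiction. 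None of these hypotheses (only divisorial steps, terminal model with $\Bir=\Aut$) is automatic, so your sketch does not yield the full conjecture; at best it recovers the scope of Theorem~\ref{main3}, and even there the descent step needs the intersection-theoretic argument rather than the purity-of-exceptional-locus argument you invoke.
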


There are several known cases when Conjecture \ref{M} holds. For example, affine or complete case \cite{Kal}, and smooth case \cite{Das}. In \cite{A}, we observe Conjecture \ref{M} for quasi-projective varieties by using minimal model program theory.

I. Biswas and N. Das showed the following theorem in \cite{BD}.

\begin{thm}[{\cite[Theorem 1.3]{BD}}]\label{BD}
Let $\varphi:X \to X$ be an endomorphism of a normal variety $X$ over $k$, and let $Y$ be a closed subset of $X$. We take a positive integer $c$ such that $\codim Y \ge c$, and set $W=X \setminus \Sing X$. Assume that $\varphi$ is injective on $X \setminus Y$, and one of the followings holds.

\begin{enumerate}
\item $c=2$, and $\varphi$ is surjective,
\item $c=2$, and $X$ is $\mathbb{Q}$-factorial,
\item $c=2$, and $X$ is locally a complete intersection,
\item $c=3$.
\end{enumerate}

Then, we have $\varphi(W) \subset W$, and $\varphi |_W:W \to W$ is an isomorphism.
\end{thm}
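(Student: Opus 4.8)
The plan is to reduce the statement to the smooth case of Conjecture \ref{M}, established in \cite{Das}, by first proving that $\varphi$ is birational and then that it carries the smooth locus $W$ into itself.

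First I would check that $\varphi$ is birational. Since $\varphi$ is injective on the dense open set $X \setminus Y$, its general fibres are single points, so by the theorem on fibre dimension $\dim \overline{\varphi(X)} = \dim X$ and $\varphi$ is dominant. Hence $\varphi$ is generically finite, and the degree $[k(X):\varphi^* k(X)]$ equals the cardinality of a general fibre of $\varphi$; as we are in characteristic zero, injectivity on a dense open set forces this degree to be $1$. Therefore $\varphi^*$ is an isomorphism of function fields and $\varphi$ is birational. Next I would apply Zariski's Main Theorem to the restriction $\varphi|_{X \setminus Y}\colon X \setminus Y \to X$: this morphism is birational onto $X$, quasi-finite (being injective), and its target $X$ is normal, so it is an open immersion. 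Writing $U':=\varphi(X \setminus Y)$, we obtain an isomorphism $X \setminus Y \xrightarrow{\ \sim\ } U'$ onto an open subset of $X$. In particular $\varphi$ is a local isomorphism at every point of $X \setminus Y$, so $\varphi(W \setminus Y) \subseteq W$, and the non-\'etale locus of $\varphi$ is contained in $Y$, hence has codimension at least $c$. It remains only to control $\varphi$ along $Y$, concretely to show that no smooth point lying on $Y$ is sent into $\Sing X$.

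The hard part is exactly this claim, $\varphi(W) \subseteq W$, and this is where the four hypotheses enter. If a smooth point $x \in W \cap Y$ were sent to $s \in \Sing X$, then, since $\varphi$ is injective on $X \setminus Y$, the fibre $\varphi^{-1}(s)$ would meet $X \setminus Y$ in at most one point while still containing $x$, so that whatever contraction $\varphi$ performs is forced onto the codimension-$\ge c$ set $Y$. I would rule this out by a purity/negativity argument: the branch locus of $\varphi$ sits inside $Y$, and Zariski--Nagata purity of the branch locus wants it to be pure of codimension one, so the codimension hypothesis should force it to be empty once the ambient geometry is good enough to apply purity at the relevant points. This is precisely what the cases supply. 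In case (1), surjectivity gives $X \setminus U' \subseteq \varphi(Y)$, of codimension at least $2$, so $\varphi$ is an isomorphism in codimension one; in case (2), the $\mathbb{Q}$-factoriality of $X$ makes the relevant Weil divisors $\mathbb{Q}$-Cartier and allows an intersection-theoretic negativity computation on the contracted locus; in case (3), the local complete intersection hypothesis (whence Gorenstein and Cohen--Macaulay) provides the depth needed to invoke purity of the branch locus along $\Sing X$; and in case (4), the extra codimension $c=3$ gives enough room to conclude directly. I expect this step, making purity or negativity applicable at the singular points of $X$ and thereby forcing $\varphi(W) \subseteq W$, to be the main obstacle.

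Finally, granted $\varphi(W) \subseteq W$, the restriction $\varphi|_W\colon W \to W$ is an endomorphism of the smooth variety $W$ that is injective on $W \setminus (Y \cap W)$, where $Y \cap W$ has codimension at least $2$ in $W$. The smooth case of Miyanishi's conjecture, proved in \cite{Das}, then shows that $\varphi|_W$ is an automorphism, which gives both $\varphi(W) \subseteq W$ and that $\varphi|_W$ is an isomorphism, as required.
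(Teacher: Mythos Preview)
This theorem is not proved in the present paper at all: it is quoted verbatim from \cite{BD} and used as a black box (in Lemma~\ref{YZSing} and in the proofs of Theorems~\ref{main1} and~\ref{main2}). There is therefore no proof here to compare your proposal against.

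That said, your overall strategy is the natural one and presumably matches what Biswas--Das do: first show $\varphi$ is birational and an open immersion off $Y$ (this part you have cleanly, and it agrees with Lemma~\ref{bir1}, Remark~\ref{bir2} and Theorem~\ref{ZMT} in the paper), then establish the key containment $\varphi(W)\subseteq W$ using the case hypotheses, and finally invoke the smooth case of Miyanishi from \cite{Das}. Where your sketch is thin is exactly the step you flag as hard. The phrase ``purity of the branch locus'' is not quite the right handle: Zariski--Nagata purity concerns the ramification locus of a finite (or quasi-finite) morphism between regular schemes, whereas the obstruction here is a birational morphism sending a smooth point to a singular one, which is governed rather by van der Waerden--Zariski purity for birational morphisms to factorial targets (cf.\ Theorem~\ref{ZVP} in the paper). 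Your case-by-case remarks for (2) and (4) are placeholders rather than arguments; in particular, for (2) you would need to explain why $\mathbb{Q}$-factoriality forces any contracted component of $Y\cap W$ to be divisorial in $W$, contradicting $\codim Y\ge 2$, and for (4) why codimension $3$ alone suffices without any factoriality or LCI assumption on the target. None of this is fatal to the plan, but to turn the proposal into a proof you would have to supply these arguments, and for that one should consult \cite{BD} directly.
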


We prove the following by using Theorem \ref{BD}.

\begin{thm}\label{main1}
Let $\varphi:X \to X$ be an endomorphism of a normal variety $X$ over $k$, and let $Y$ be a closed subset of $X$. Assume that $\varphi$ is injective on $X \setminus Y$, $\codim Y=2$, and one of the followings holds.

\begin{enumerate}
\item $\varphi$ is surjective,
\item $X$ is $\mathbb{Q}$-factorial,
\item $X$ is locally a complete intersection.
\end{enumerate}

Then, we can replace $Y$ to make $\codim Y \ge 3$ with the condition that $\varphi$ is isomorphism on $X \setminus Y$. 
\end{thm}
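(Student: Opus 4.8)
The plan is to reduce the statement to a purely local question at the codimension-two points of $\Sing X$, where it becomes an instance of Zariski's Main Theorem, and then to isolate the single configuration that hypotheses (1)--(3) are genuinely needed to exclude. First I would apply Theorem \ref{BD} with $c=2$: under each of the three hypotheses it gives $\varphi(W)\subseteq W$ together with an isomorphism $\varphi|_W:W\to W$, where $W=X\setminus\Sing X$. Since $\varphi$ is injective on the dense open set $X\setminus Y$ and $\operatorname{char}k=0$, the map $\varphi$ is generically injective, hence dominant of degree one, so $\varphi^\ast$ is an automorphism of $k(X)$ and $\varphi$ is birational. Because $\varphi|_W$ is an isomorphism onto the open set $W$, the morphism $\varphi$ is a local isomorphism at every point of $W$; hence the closed locus $N$ of points where $\varphi$ fails to be a local isomorphism satisfies $N\subseteq\Sing X$, which has codimension at least $2$ by normality. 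If $\codim\Sing X\ge 3$ we are done with $Y'=\Sing X$, so the whole problem concentrates on the codimension-two irreducible components $D$ of $\Sing X$: it suffices to show that $\varphi$ is a local isomorphism at the generic point $\eta$ of each such $D$, for then $N\cap D$ is a proper closed subset of $D$, every component of $N$ has codimension at least $3$, and we may take $Y'=\overline{N}$, enlarged to a $\varphi$-invariant set (which remains of codimension $\ge 3$ since $\varphi$ is quasi-finite, indeed a local isomorphism, near the complement of $N$).

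For such an $\eta$ the target $X$ is normal and $\varphi$ is birational, so by Zariski's Main Theorem in Grothendieck's form it suffices to prove that $\varphi$ is \emph{quasi-finite} at $\eta$: a separated, quasi-finite, birational morphism onto a normal variety is an open immersion, and an open immersion is in particular a local isomorphism. To detect quasi-finiteness I would use injectivity directly. Any positive-dimensional irreducible component $T$ of a fibre of $\varphi$ meets $X\setminus Y$ in at most one point, so $T\subseteq Y$; thus the locus where $\varphi$ has positive-dimensional fibres is contained in $Y$. Consequently, if $D\not\subseteq Y$ then $\eta\notin Y$, the fibre through $\eta$ is finite at $\eta$, and $\varphi$ is quasi-finite there, so Theorem~\ref{A}-type considerations are not even needed: Zariski's Main Theorem already yields that $\varphi$ is a local isomorphism at $\eta$. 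A short separate check, using that $\varphi|_W$ is a homeomorphism and that two distinct prime divisors through $D$ meet set-theoretically in $D$ near $\eta$, shows moreover that $\varphi(\eta)\in\Sing X$; this guarantees that $\varphi$ carries the codimension-two part of $\Sing X$ into $\Sing X$ without creating smooth points, which is what makes the invariant $Y'$ above come out correctly.

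The main obstacle is the remaining case $D\subseteq Y$. Here injectivity gives no information at $\eta$, and a priori $\varphi$ could contract $D$ to a point $\xi$ with $\codim\xi\ge 3$ while still restricting to an isomorphism on $W$; such a small self-contraction would force $D\subseteq Y'$ for every admissible $Y'$ and would contradict the conclusion, so it is exactly here that (1)--(3) must enter. I expect to exclude it as follows. Under (2), a birational self-morphism of a normal $\mathbb{Q}$-factorial variety cannot be a small contraction, since contracting the codimension-two $D$ to higher codimension onto a $\mathbb{Q}$-factorial target is impossible; hence $\varphi$ is quasi-finite at $\eta$ and Zariski's Main Theorem applies. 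Under (3), the local-complete-intersection hypothesis lets me invoke Zariski--Nagata purity for the branch locus of the generically finite $\varphi$, forcing the ramification and contraction locus to have pure codimension one and therefore to avoid the codimension-two point $\eta$. Under (1), I would combine surjectivity with Theorem \ref{A} on a suitable $\varphi$-invariant locus to preclude the contraction. In each case $\varphi$ becomes quasi-finite, hence a local isomorphism, at $\eta$; then $N$ has codimension $\ge 3$, and taking $Y'$ to be its closure (made $\varphi$-invariant) gives $\codim Y'\ge 3$ with $\varphi|_{X\setminus Y'}:X\setminus Y'\to X\setminus Y'$ an isomorphism. The delicate point throughout is precisely the $\mathbb{Q}$-factorial/lci exclusion of small self-contractions, which is where I expect the real work of the proof to lie.
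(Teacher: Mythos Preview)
Your initial reduction is correct and matches the paper: apply Theorem~\ref{BD} to get that $\varphi|_W$ is an isomorphism, replace $Y$ by the positive-fibre-dimension locus $A$ (this is the paper's Lemma~\ref{YZSing}), and observe that the problem localises at the generic points of the codimension-two components $S_1,\dots,S_n$ of $\Sing X$. The case $S_i\not\subseteq Y$ is indeed handled by Zariski's Main Theorem.

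The gap is in your treatment of the remaining case $D\subseteq Y$. You propose to re-invoke hypotheses (1)--(3) \emph{separately} to rule out a small self-contraction, but neither of the two non-trivial arguments you sketch is valid as stated. For (2), the assertion that a birational morphism onto a normal $\mathbb{Q}$-factorial target has pure codimension-one exceptional locus requires the morphism to be \emph{proper} (or projective); $\varphi$ is not proper here, and without properness the inverse rational map $\varphi^{-1}$ need not extend over codimension-two loci even when the source is $\mathbb{Q}$-factorial, since the target $X$ is not assumed complete. For (3), Zariski--Nagata purity constrains the \emph{ramification} locus of a finite (or quasi-finite flat) morphism, not the contraction locus of a birational one; a birational $\varphi$ that collapses $D$ is unramified in codimension one yet still fails to be quasi-finite at $\eta_D$, so purity of the branch locus says nothing. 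Thus the step you identify as ``where the real work lies'' is not carried out.

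The paper's proof proceeds differently and, crucially, uses hypotheses (1)--(3) \emph{only} through Theorem~\ref{BD}; once $\varphi|_W$ is an isomorphism, the rest is uniform. The key is Lemma~\ref{ZSing}: take a resolution $\pi:\widetilde X\to X$ that is an isomorphism over $W$, lift $\varphi$ to a birational self-map $\widetilde\varphi$ of $\widetilde X$, and track an exceptional divisor $E$ with $\pi(E)\subseteq Z$. Since $\widetilde X$ is smooth, $\widetilde\varphi^{-1}$ is defined on a dense open $E_0\subseteq E$, whence $\pi(E_0)\subseteq\varphi(Y)\subseteq\overline{\varphi(Y)}$; but after replacing $Y$ by $A$ every component of $Y$ is genuinely contracted, so $\codim\overline{\varphi(Y)}\ge 3$, giving $\codim\pi(E)\ge 3$. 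Hence $\Sing X\cap Z$ has codimension $\ge 3$, so no $S_i$ lies in $Z$. It follows that for each $i$ there is a unique $j$ with $\overline{\varphi(S_j)}=S_i$; in particular $\varphi$ does not contract any $S_j$, so $S_j\not\subseteq Y=A$, contradicting $\codim Y=2$. The small-contraction scenario is thus excluded by a resolution-plus-permutation argument rather than by a direct purity statement on $X$.
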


From Theorem \ref{main1}, we can prove that Conjecture \ref{M} holds for threefolds with a certain condition as follows.

\begin{cor}\label{3fld}
In the settings of Theorem \ref{BD}, suppose that $\dim X=3$, and one of the conditions in Theorem \ref{BD} holds. Then, Conjecture \ref{M} holds for $X$.
\end{cor}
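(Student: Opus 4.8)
The plan is to reduce, in each of the four cases, to the single clean situation in which $\varphi$ is injective away from a \emph{finite} set, and then to finish uniformly with Zariski's main theorem together with Theorem \ref{A}. If one of the conditions (1), (2), (3) holds, then $\codim Y = 2$ and the hypotheses of Theorem \ref{main1} (which itself rests on Theorem \ref{BD}) are met, so I may replace $Y$ by a closed subset, still denoted $Y$, with $\codim Y \ge 3$ and such that $\varphi$ restricts to an isomorphism on $X \setminus Y$. Since $\dim X = 3$, the condition $\codim Y \ge 3$ forces $\dim Y \le 0$, so $Y$ is a finite set of closed points. In case (4) the hypothesis already gives $\codim Y \ge 3$, hence $Y$ finite and $\varphi$ injective (though not a priori an isomorphism) on $X \setminus Y$; this case will be handled by the same argument, the isomorphism statement not being needed.

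Next I would establish that $\varphi$ is quasi-finite and birational. Because $Y$ is finite, any subvariety $V \subseteq X$ with $\dim V \ge 1$ meets $X \setminus Y$ in a dense open subset of $V$, on which $\varphi$ is injective; hence $\varphi|_{V}$ is generically injective and $\varphi(V)$ is again positive-dimensional, so $\varphi$ contracts no subvariety. Thus every fibre of $\varphi$ is finite, i.e. $\varphi$ is quasi-finite, and in particular $\varphi$ is dominant with image of full dimension. As $\varphi$ is generically injective and we work in characteristic zero, the induced extension $k(X) \hookrightarrow k(X)$ has degree one, so $\varphi$ is birational.

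Now comes the key step. The morphism $\varphi \colon X \to X$ is separated, quasi-finite and birational, and its target $X$ is normal; by Zariski's main theorem it factors as an open immersion followed by a finite morphism, and the finite part, being birational onto the normal variety $X$, is an isomorphism. Hence $\varphi$ is an open immersion onto an open subset of $X$, and in particular $\varphi$ is injective on all of $X$. Applying Theorem \ref{A} with $B = \operatorname{Spec} k$, the injective endomorphism $\varphi$ is bijective, so the open subset $\varphi(X)$ equals $X$ and the open immersion $\varphi$ is an isomorphism, that is, an automorphism. This proves Conjecture \ref{M} for $X$.

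I expect the main subtlety to be the passage through Zariski's main theorem rather than through properness: since $X$ need not be complete, one cannot simply assert that $\varphi$ is finite and conclude at once, and it is precisely the reduction to a \emph{finite} exceptional set $Y$ (supplied by Theorem \ref{main1} in cases (1)--(3), and by hypothesis in case (4)) that rules out contractions and secures quasi-finiteness; only then do Zariski's main theorem and Theorem \ref{A} combine to upgrade ``isomorphism off a finite set'' to ``automorphism''. A secondary point to verify carefully is that the replacement of $Y$ in Theorem \ref{main1} enlarges the open locus on which $\varphi$ is injective, so that the no-contraction argument genuinely applies to the new, codimension-$\ge 3$ set $Y$.
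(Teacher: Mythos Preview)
Your proposal is correct and follows essentially the same route as the paper: use Theorem~\ref{main1} (respectively the hypothesis in case (4)) to reduce to $\codim Y\ge 3$, hence $\dim Y\le 0$ on a threefold, so $\varphi$ has finite fibres, and then conclude via Zariski's main theorem together with Ax--Grothendieck. The paper simply compresses your last two paragraphs into a single appeal to Remark~\ref{finfib} (which packages Lemma~\ref{bir1}, Theorem~\ref{ZMT}, and Proposition~\ref{bijiso}); your more explicit unwinding of that remark is accurate, and the minor slip of writing ``$\codim Y=2$'' rather than ``$\codim Y\ge 2$'' in cases (1)--(3) is harmless since the case $\codim Y\ge 3$ needs no further reduction.
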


By using our argument to prove Theorem \ref{main1}, we can prove the following.

\begin{thm}\label{main2}
Let $\varphi:X \to X$ be an endomorphism of a normal variety $X$ over $k$, and let $Y$ be a closed subset of $X$. Assume that $\varphi$ is injective on $X \setminus Y$, $\codim Y \ge2$, $\dim \Sing(X) \le 1$, and one of the followings holds.

\begin{enumerate}
\item $\varphi$ is surjective,
\item $X$ is $\mathbb{Q}$-factorial,
\item $X$ is locally a complete intersection,
\item $\codim Y \ge 3$.
\end{enumerate}

Then, $\varphi$ is an automorphism.
\end{thm}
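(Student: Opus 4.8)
The plan is to reduce the statement to the injectivity of $\varphi$ on all of $X$ and then invoke the Ax--Grothendieck theorem. First I would observe that the hypotheses are precisely those of Theorem \ref{BD}: in cases (1)--(3) one takes $c=2$, and in case (4) one takes $c=3$. Thus Theorem \ref{BD} applies and yields $\varphi(W)\subset W$ together with the fact that $\varphi|_W\colon W\to W$ is an isomorphism, where $W=X\setminus\Sing X$. In particular $\varphi$ is birational and dominant, and the inclusion $\varphi(W)\subset W$ gives $\varphi^{-1}(\Sing X)\subset\Sing X$, so that $\Sing X$ is backward invariant and any failure of injectivity must involve a point of $\Sing X$. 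Next, in cases (1)--(3) with $\codim Y=2$ I would apply Theorem \ref{main1} to replace $Y$ by a closed subset $Y'$ with $\codim Y'\ge 3$ on whose complement $\varphi$ is an isomorphism; when $\codim Y\ge 3$ already (in particular in case (4)), one combines the injectivity hypothesis with the isomorphism $\varphi|_W$ to reach the same conclusion for a suitable $Y'$ of codimension at least $3$. At this stage $\varphi$ is an isomorphism away from $Y'$, and the only obstruction to its being an automorphism is concentrated on $\Sing X$, a set of dimension at most $1$.

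The key reduction is the following. If $\varphi$ is injective on all of $X$, then by Theorem \ref{A} it is bijective; since $X$ is normal and $k$ has characteristic zero, a bijective endomorphism of $X$ is an automorphism. Indeed, a bijective morphism is quasi-finite and, being injective and dominant in characteristic zero, birational, so by Zariski's main theorem it factors through a finite birational morphism to the normal variety $X$, hence is an open immersion; surjectivity then forces it to be an isomorphism. It therefore suffices to upgrade injectivity from $X\setminus Y'$ to all of $X$. Writing $N=\{x\in X:\varphi^{-1}(\varphi(x))\ne\{x\}\}$ for the non-injective locus, I would use that $\varphi|_W$ is an isomorphism to show $N\subset \Sing X\cup(\varphi|_W)^{-1}(\varphi(\Sing X)\cap W)$, a set of dimension at most $\dim\Sing X\le 1$. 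The task is then to prove $N=\varnothing$.

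I expect this last step to be the main obstacle. A nonempty $N$ produces either a curve $C\subset\Sing X$ contracted by $\varphi$ to a single point, or a one-dimensional family of collisions between $\Sing X$ and $W$; in both cases injectivity on $X\setminus Y'$ forces the offending curve to lie inside $\Sing X\cap Y'$, whose dimension is at most $\min(1,\dim X-3)$. When $\dim X=3$ this intersection is finite, so no curve can be contracted, and a collision of a singular point with a smooth point over a normal point of $X$ is excluded by the connectedness of the fibres of the birational morphism $\varphi$ (reducing to a projective completion, as in the complete case \cite{Kal}, when $X$ is not proper); this settles the threefold case cleanly. For $\dim X\ge 4$ the codimension count alone leaves room for a contracted curve sitting simultaneously in $\Sing X$ and in $Y'$, and excluding such small contractions of curves inside the $\le 1$-dimensional singular locus is precisely where the hypothesis $\dim\Sing X\le 1$ must be used in full strength, presumably via a finer contraction analysis of the birational morphism $\varphi$ along $\Sing X$. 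Once $N=\varnothing$ is established, the reduction above completes the proof.
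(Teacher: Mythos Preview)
Your proposal correctly sets up the problem via Theorem~\ref{BD} and correctly locates the obstruction, but it has a genuine gap at the decisive step. After your reductions you are left, for $\dim X\ge 4$, with the possibility of a curve $C\subset\Sing X$ contracted by $\varphi$, and you concede that the codimension bound $\codim Y'\ge 3$ alone does not exclude this; the phrase ``presumably via a finer contraction analysis'' is not an argument. In fact the detour through Theorem~\ref{main1} is unhelpful here: $\codim Y'\ge 3$ still allows $\dim Y'\ge 1$ once $\dim X\ge 4$. The paper instead uses only Lemma~\ref{YZSing} (replacing $Y$ by the positive--fiber--dimension locus $A$) to get $Y,Z\subset\Sing X$, hence $\dim Y\le 1$ directly, and then splits into the trivial case $\dim Y\le 0$ and the case $\dim Y=1$.

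The missing idea for $\dim Y=1$ is to pass to a resolution of singularities $\pi\colon\widetilde X\to X$ that is an isomorphism over $W$, and to lift $\varphi$ to a birational self-map $\widetilde\varphi$ of $\widetilde X$. For any $\pi$-exceptional prime divisor $E$ with $\pi(E)\subset Z$, smoothness of $\widetilde X$ lets one apply $\widetilde\varphi^{-1}$ on a dense open $E_0\subset E$, and $\pi\circ\widetilde\varphi^{-1}=\varphi^{-1}\circ\pi$ gives $\pi(E_0)\subset\varphi(Y)$; since $Y$ now consists of curves contracted by $\varphi$, $\dim\overline{\varphi(Y)}\le 0$, so $\pi(E)$ is a point (this is Lemma~\ref{ZSing2}). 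On the other hand $\dim Z=\dim Y=1$ by Lemma~\ref{Z}, and any one-dimensional component $Z_0$ of $Z$ lies in $\Sing X$, over which $\pi$ is nowhere an isomorphism; purity of the exceptional locus over a smooth source (Theorem~\ref{ZVP}) then forces some exceptional prime divisor to surject onto $Z_0$, contradicting the previous sentence. This resolution--and--inverse argument is exactly what your ``finer contraction analysis'' would have to become, and without it the proof is incomplete. (Your side remark invoking connectedness of fibers of $\varphi$ via a projective completion is also unjustified, since $\varphi$ need not extend to a morphism on any completion; but this is secondary to the main gap.)
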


We also observe higher dimensional cases by using minimal model program theory, and obtain the following theorem. This generalizes \cite[Theorem 1.4]{A}.

\begin{thm}\label{main3}
Let $X$ be an open subset of a $\mathbb{Q}$-factorial normal projective variety $\overline{X}$ over $k$. Suppose that $\overline{X}$ has canonical singularities and $\overline{X}$ has a sequence of divisorial contractions $\overline{X}=\overline{X}_0 \to \overline{X}_1 \to \cdots \to \overline{X}_n$. If $\Bir(\overline{X}_n)=\Aut(\overline{X}_n)$ holds, then Conjecture \ref{M} holds for $X$, where $\Bir(\overline{X}_n)$ is the birational automorphism group of $\overline{X}_n$, and $\Aut(\overline{X}_n)$ is the automorphism group of $\overline{X}_n$.

In particular, if $\overline{X}_n$ is the canonical model of $\overline{X}$ or a birationally superrigid Mori fiber space, then Conjecture \ref{M} holds for $X$.
\end{thm}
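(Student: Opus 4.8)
The plan is to use the minimal model program to replace the study of $\varphi$ on $X$ by that of an honest automorphism of $\overline{X}_n$, and then to transport the conclusion back to $X$.

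First I would record two elementary reductions. Since $\varphi$ is injective on the dense open set $X\setminus Y$, it is dominant and generically one-to-one, hence birational; in characteristic zero this produces an element $\overline{\varphi}\in\Bir(\overline{X})$. Moreover $\varphi$ contracts no prime divisor: for a prime divisor $D\subseteq X$ the open subset $D\setminus Y$ is dense in $D$ (as $\codim Y\ge 2$) and $\varphi$ is injective on it, so $\dim\varphi(D)=\dim D$. In particular $\varphi|_{X\setminus Y}$ is an injective, hence quasi-finite, birational morphism to the normal variety $X$, so by Zariski's main theorem it is an open immersion. It therefore suffices to prove that $\varphi$ is bijective on all of $X$; normality of $X$ and Zariski's main theorem then upgrade the birational bijection to an automorphism.

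Next let $\rho:\overline{X}=\overline{X}_0\to\overline{X}_n$ be the composite of the given divisorial contractions. Because each step contracts a divisor, the exceptional locus $\mathrm{Exc}(\rho)$ is a union of prime divisors and $\rho$ is an isomorphism over the complement of a closed set of codimension $\ge 2$ in $\overline{X}_n$. Since all of the $\overline{X}_i$ share the function field, $\overline{\varphi}$ determines an element of $\Bir(\overline{X}_n)=\Aut(\overline{X}_n)$, which I call $\tau$. Comparing the two morphisms $\rho\circ\varphi$ and $\tau\circ\rho$ from $X$ to $\overline{X}_n$, which coincide on a dense open set and are both everywhere defined on $X$, yields the \emph{intertwining relation} $\rho\circ\varphi=\tau\circ\rho$ on $X$. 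The case $n=0$ is already instructive: then $\overline{\varphi}\in\Bir(\overline{X})=\Aut(\overline{X})$, and since $\varphi$ maps $X$ into $X$, an automorphism of $\overline{X}$ carrying the open set $X$ into itself must carry its complement onto itself and hence restricts to an automorphism of $X$.

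From the intertwining relation I would extract bijectivity of $\varphi$. If $\varphi(x)=\varphi(x')$ then $\tau\rho(x)=\tau\rho(x')$, so $\rho(x)=\rho(x')$ because $\tau$ is injective; thus any failure of injectivity is confined to the positive-dimensional fibres of $\rho$, that is, to $\mathrm{Exc}(\rho)$, where injectivity of $\varphi$ on $X\setminus Y$ together with the fact that positive-dimensional $\varphi$-fibres are contained in $Y$ leaves only finite coincidences within $Y\cap\mathrm{Exc}(\rho)$ to be excluded. For surjectivity, given $y\in X$ one locates the fibre $\rho^{-1}(\tau^{-1}\rho(y))$ and uses the relation to produce a point of it mapping to $y$. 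The main obstacle is precisely the bookkeeping of the open set $X$ against these exceptional loci: the relation $\rho\circ\varphi=\tau\circ\rho$ controls $\varphi$ cleanly away from $\mathrm{Exc}(\rho)$, but the exceptional divisors are of codimension one and cannot be absorbed into $Y$, so one must show that the preimages manufactured by $\tau^{-1}$ genuinely lie in $X$ and that no new identifications arise along $\mathrm{Exc}(\rho)$. This is where the hypotheses that $\overline{X}$ has canonical singularities and that every contraction is divisorial enter decisively: they force $\mathrm{Exc}(\rho)$ to be purely divisorial with centres of codimension $\ge 2$ (no small or flipping contributions), and they make $\tau$ compatible with discrepancies, which is what should let the exceptional structure be matched with $X$. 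Finally, the stated special cases follow because $\Bir(\overline{X}_n)=\Aut(\overline{X}_n)$ is classical for the canonical model, by its uniqueness, and holds by definition when $\overline{X}_n$ is a birationally superrigid Mori fiber space.
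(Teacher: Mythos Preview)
Your reductions and the intertwining relation $\rho\circ\varphi=\tau\circ\rho$ on $X$ are correct and are indeed the starting point of the paper's argument. But the heart of your proposal, deducing bijectivity of $\varphi$ directly from this relation, does not work, and the paper proceeds very differently.

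Concretely: suppose $\varphi$ is not an automorphism. By Zariski's main theorem there is a curve $C\subset X$ contracted by $\varphi$. Your intertwining relation then gives only that $\tau(\rho(C))$ is a point, hence $\rho(C)$ is a point, i.e.\ $C$ lies in a fibre of $\rho$ inside $\mathrm{Exc}(\rho)$. This is \emph{not} a contradiction: $\mathrm{Exc}(\rho)$ is a union of divisors and is full of such curves, and nothing you have written rules out $\varphi$ collapsing one of them. Likewise, your surjectivity sketch fails on these fibres: the relation only says that $\varphi$ maps $\rho^{-1}(\tau^{-1}\rho(y))$ into $\rho^{-1}(\rho(y))$, with no control on surjectivity when the fibres are positive-dimensional. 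The final paragraph gestures at ``compatibility with discrepancies'' but never uses them, so the canonical-singularities hypothesis is idle in your argument.

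What the paper actually does is an intersection-theoretic computation. Taking a resolution of indeterminacy $(\widetilde X,p,q)$ of $\overline\varphi$, it compares $p^\ast K_{\overline X}$ and $q^\ast K_{\overline X}$; after bookkeeping the boundary divisors of $\overline X\setminus X$, the canonical-singularities hypothesis (nonnegative discrepancies) yields $(K_{\overline X},\overline C)\ge 0$ for the closure $\overline C$ of $C$. Since each $f_i:\overline X_{i-1}\to\overline X_i$ is a $K$-negative divisorial contraction, $\overline C$ is \emph{not} contracted by $f_1$. A second estimate, $(\overline E,\overline C)\le 0$ for each $f$-exceptional prime divisor $\overline E$ meeting $X$ (proved by a log-discrepancy comparison after passing to an iterate of $\varphi$ fixing the exceptional divisors), lets one propagate $(K_{\overline X_i},\overline C_i)\ge 0$ through the whole sequence. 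Thus $\overline C$ survives to a curve $\overline C_n\subset\overline X_n$, and now the contradiction is genuine: $f\circ\overline\varphi$ contracts $\overline C$ while $\overline\varphi_n\circ f$ does not, yet they agree as rational maps. Your proposal is missing precisely this $K$-positivity argument, which is where both ``canonical singularities'' and ``divisorial'' are actually used.
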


In the rest of this section, we introduce some useful results which are already known. These are the same as in \cite{A}.

S. Kaliman used the following two useful lemmas in \cite{Kal}.

\begin{lem}[{\cite[Lemma 2]{Kal}}]\label{normal}
If Conjecture \ref{M} holds for the normalization of $X$, then it holds for $X$. 
\end{lem}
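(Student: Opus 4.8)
The plan is to pass to the normalization, where Conjecture \ref{M} is assumed, and then descend. Write $\nu:\tilde{X}\to X$ for the normalization, a finite birational morphism that is an isomorphism over the normal locus. First I would record that $\varphi$ is birational: it is injective on the dense open set $X\setminus Y$, hence dominant, and a dominant generically injective morphism in characteristic zero has degree one. Consequently $\varphi\circ\nu:\tilde{X}\to X$ is a birational morphism out of the normal variety $\tilde{X}$, so by the universal property of normalization it factors uniquely as $\varphi\circ\nu=\nu\circ\tilde{\varphi}$ for an endomorphism $\tilde{\varphi}:\tilde{X}\to\tilde{X}$, which is again birational.

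Next I would verify that the pair $(\tilde{\varphi},\tilde{Y})$ satisfies the hypotheses of Conjecture \ref{M} on $\tilde{X}$, where $\tilde{Y}:=\nu^{-1}(Y)$; since $\nu$ is finite, $\codim\tilde{Y}\ge 2$. The claim is that $\tilde{\varphi}$ is injective on $\tilde{X}\setminus\tilde{Y}$. If $\tilde{\varphi}(p)=\tilde{\varphi}(p')$ with $\nu(p),\nu(p')\notin Y$, then applying $\nu$ gives $\varphi(\nu(p))=\varphi(\nu(p'))$, so $\nu(p)=\nu(p')=:x$ by injectivity of $\varphi$ on $X\setminus Y$; thus $p,p'$ lie in a single fibre $\nu^{-1}(x)$ with $x\in X\setminus Y$. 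The only delicate point is to rule out that $\tilde{\varphi}$ collapses two distinct points of such a fibre, which can happen only over a non-normal (multibranch) point $x$. Here I would argue that distinct points $p\ne p'$ of $\nu^{-1}(x)$ correspond to distinct branches $B,B'$ of $X$ at $x$, and that if $\tilde{\varphi}(p)=\tilde{\varphi}(p')=q$ then, $\tilde{\varphi}$ being dominant near each point, both $B$ and $B'$ dominate the branch $C$ of $X$ at $\varphi(x)$ determined by $q$. A general point of $C$ would then have one preimage on $B$ and another on $B'$; these are distinct points of $X\setminus Y$ (the branches are of full dimension, hence not contained in the codimension $\ge 2$ set $Y$), contradicting injectivity of $\varphi$ on $X\setminus Y$. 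Hence $\tilde{\varphi}$ is injective on $\tilde{X}\setminus\tilde{Y}$, and the assumed Conjecture \ref{M} for $\tilde{X}$ yields $\tilde{\varphi}\in\Aut(\tilde{X})$.

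It remains to descend, namely to deduce $\varphi\in\Aut(X)$ from $\tilde{\varphi}\in\Aut(\tilde{X})$, and I expect this to be the main obstacle, since $\nu$ is not invertible over the non-normal locus and a priori $\varphi$ could fail to be injective at non-normal points of $Y$. By the principle recalled in the introduction it suffices to prove that $\varphi$ is bijective, a bijective endomorphism in characteristic zero being an automorphism. Surjectivity is immediate, as $\varphi\circ\nu=\nu\circ\tilde{\varphi}$ and $\nu$ are both surjective. For finiteness I would note that $\varphi^{-1}(x')=\nu(\tilde{\varphi}^{-1}(\nu^{-1}(x')))$ is finite, so $\varphi$ is quasi-finite, while properness descends along the proper surjection $\nu$ from $\varphi\circ\nu$; hence $\varphi$ is finite and birational.

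Injectivity is the crux. Because $\varphi$ is a genuine morphism, $\tilde{\varphi}$ sends each fibre $\nu^{-1}(x)$ into the single fibre $\nu^{-1}(\varphi(x))$; writing $b(x):=\#\nu^{-1}(x)$ and using that $\tilde{\varphi}$ is bijective, one gets the numerical constraints $b(x')=\sum_{x\in\varphi^{-1}(x')}b(x)$ and $b(x)\le b(\varphi(x))$. These alone do not force the preimage sets to be singletons, so the essential input is that $\tilde{\varphi}$, being the lift of a morphism, must be compatible with the conductor of $\nu$ (the gluing data recovering $X$ from $\tilde{X}$); combined with the numerical constraints this should force $\varphi^{-1}(x')$ to be a single point for every $x'$. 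Equivalently, the task is to upgrade the statement ``$\tilde{\varphi}$ descends to an endomorphism of $X$'' to ``$\tilde{\varphi}$ descends to an automorphism of $X$'', and I anticipate that making this conductor-compatibility argument precise is the genuine difficulty. Once injectivity is in hand, $\varphi$ is bijective and therefore an automorphism, completing the reduction.
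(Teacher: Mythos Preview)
The paper does not supply its own proof of this lemma: it is quoted verbatim from \cite[Lemma 2]{Kal} and used as a black box, so there is no in-paper argument to compare your proposal against. That said, your outline (lift $\varphi$ to the normalization via the universal property, apply Conjecture~\ref{M} there, then descend) is the standard strategy and matches what Kaliman does.

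Where your write-up has a genuine gap is the descent, and you flag this yourself. Two remarks. First, your appeal to ``a bijective endomorphism in characteristic zero being an automorphism'' is not justified by anything in the paper: Proposition~\ref{bijiso} explicitly requires the target to be normal, and the familiar counterexample (normalization of a cuspidal curve) shows that bijective birational morphisms to non-normal targets need not be isomorphisms. So even after you establish bijectivity of $\varphi$, you are not done, and the conductor-compatibility issue you allude to is exactly what is needed to finish --- one has to show that $\tilde{\varphi}^{-1}$ also respects the fibres of $\nu$, i.e.\ that $\tilde{\varphi}$ maps each fibre $\nu^{-1}(x)$ \emph{onto} (not merely into) a fibre, so that the inverse descends to a morphism $X\to X$. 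Second, your numerical constraint $b(x)\le b(\varphi(x))$ is the right starting point; the missing step is a finiteness/Noetherian argument on the stratification of $X$ by the value of $b$ (equivalently, on the conductor subscheme and its iterated images) forcing equality everywhere. Until that is written out, the proof is incomplete.
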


\begin{lem}[{\cite[Lemma 3]{Kal}}]\label{Z}
Let $Z=X \setminus \varphi(X \setminus Y)$. Then, $Z$ is a closed subset of $X$ and $\dim Y=\dim Z$ if $Z \neq \emptyset$.
\end{lem}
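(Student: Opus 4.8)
The plan is to first reduce to a situation where birational geometry applies, then bound $\dim Z$ from above and below separately. Since $\codim Y\ge 2$, the open set $U:=X\setminus Y$ is dense, so the hypothesis that $\varphi$ is injective on $U$ forces the generic fibre of $\varphi$ to be a single point; hence $\varphi$ is dominant and (being of degree $1$ in characteristic zero) birational. By Lemma \ref{normal} we may also assume $X$ is normal. By Chevalley's theorem $\varphi(U)$ is constructible, so $Z$ is constructible; and because $\varphi|_U\colon U\to X$ is an injective, hence quasi-finite, birational morphism to the normal variety $X$, Zariski's Main Theorem shows it is an open immersion. Thus $\varphi(U)$ is open and $Z$ is closed, giving the first assertion. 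It also shows that $\varphi$ restricts to an isomorphism $U\xrightarrow{\sim}X\setminus Z$, so $X\setminus Z$ equals the image $\varphi(U)$ and the inverse $g:=\varphi^{-1}$ is a morphism, injective, on all of $X\setminus Z$.

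For the upper bound $\dim Z\le\dim Y$, the key observation is that injectivity on $U$ confines every contraction of $\varphi$ to $Y$: if $C$ is a positive-dimensional irreducible component of a fibre of $\varphi$, then $C\cap U$ has at most one point, so $C\subseteq Y$. Consequently the image under $\varphi$ of the locus contracted by $\varphi$ is contained in $\varphi(Y)$, and since the indeterminacy locus of the birational inverse $g$ on the normal variety $X$ is exactly this image, it has dimension at most $\dim Y$. Now decompose $Z=(Z\cap\varphi(X))\sqcup(Z\setminus\varphi(X))$. A point of $Z\cap\varphi(X)$ lies in $\varphi(X)=\varphi(U)\cup\varphi(Y)$ but not in $\varphi(U)=X\setminus Z$, hence in $\varphi(Y)$, so $\dim(Z\cap\varphi(X))\le\dim Y$. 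For a point missed by $\varphi$ entirely, I argue that the generic point of any component of $X\setminus\varphi(X)$ must lie in the indeterminacy locus of $g$: otherwise $g$ is defined there and exhibits that point as $\varphi(g(\cdot))$, contradicting that it is not in the image. Hence $\dim(X\setminus\varphi(X))\le\dim Y$ as well, and combining the two parts gives $\dim Z\le\dim Y$.

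For the lower bound $\dim Z\ge\dim Y$ I would run the symmetric argument with the roles of $(\varphi,Y)$ and $(g,Z)$ interchanged: $g$ is defined and injective on $X\setminus Z$ and carries it isomorphically onto $X\setminus Y$, so $Y=X\setminus g(X\setminus Z)$ is precisely the missed locus of $g$, while the locus contracted by $g$ lies in $Z$. Feeding this into the same dimension count bounds $\dim Y$ by $\dim Z$, whence equality whenever $Z\neq\emptyset$. I expect the main obstacle to be exactly this symmetric step: unlike $\varphi$, the map $g$ is only a rational self-map of $X$, so to repeat the argument one must first control its indeterminacy, namely verify that $\mathrm{Indet}(g)\subseteq Z$ and use $\mathrm{Indet}(\varphi)=\emptyset$, and then check that the step ``the generic point of a missed component lies in the indeterminacy of the inverse'' remains valid when the inverse is only generically defined. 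The reduction to normal $X$ is what makes both the open-immersion input (Zariski's Main Theorem) and the codimension-$\ge 2$ bound on indeterminacy loci available, so tracking normality throughout will be essential.
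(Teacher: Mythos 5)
Your opening reductions are fine as far as they go (given $X$ normal, Remark \ref{bir2} plus Theorem \ref{ZMT} make $\varphi|_{X\setminus Y}$ an open immersion, so $Z$ is closed and $\varphi$ restricts to an isomorphism $X\setminus Y\xrightarrow{\sim}X\setminus Z$; one can even deduce $\varphi^{-1}(Z)=Y$ and that all fibres over $X\setminus Z$ are single points), but the dimension equality --- the actual content of the lemma --- has a genuine gap, and it sits exactly where you yourself flagged it. For the upper bound, your assertion that the indeterminacy locus of $g=\varphi^{-1}$ ``is exactly the image of the contracted locus'' is unjustified and in fact circular: two sentences later you place the generic points of the components of $X\setminus\varphi(X)$ \emph{inside} $\mathrm{Indet}(g)$, so $\mathrm{Indet}(g)$ contains the missed locus, which is precisely the set you are trying to bound by $\dim Y$; nothing you prove identifies it with a subset of $\overline{\varphi(Y)}$. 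Indeed the identification is false at the level of generality at which you argue: for the open immersion $X\setminus D\hookrightarrow X$ missing a prime divisor $D$ (contracted locus empty, $Y=\emptyset$), the inverse rational map is undefined along all of $D$, and there $\dim Z=\dim D>\dim Y$. For the lower bound, the ``symmetric argument'' cannot be run, because $g$ is not an endomorphism of $X$ but only an isomorphism $X\setminus Z\to X\setminus Y$; and the symmetric statement is again false for maps between two different varieties: a small contraction $f\colon X'\to X$ of a curve $C$ in a normal threefold is injective off $Y=C$ (codimension $2$) with $Z=f(C)$ a point, so $\dim Y>\dim Z$.

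These two examples show that every tool your draft actually uses --- injectivity off codimension $2$, birationality, normality, Theorem \ref{ZMT}, indeterminacy loci --- is equally available for a birational morphism $X'\to X$ between \emph{different} varieties, for which the conclusion fails in both directions. So no proof along these lines can close the gap: the self-map structure must enter essentially, and your proposal never invokes Theorem \ref{A}, iteration of $\varphi$, or any counting. (This is consistent with the paper itself, which gives no proof to compare against but imports the lemma from \cite[Lemma 3]{Kal}.) The mechanism that makes the statement true for endomorphisms is Ax-type: spreading out to a model over a finite field, the decomposition $X=\varphi(X\setminus Y)\sqcup Z$ together with the fact that injectivity makes the unique preimage of an $\mathbb{F}_q$-point Galois-invariant gives $\#\varphi((X\setminus Y)(\mathbb{F}_q))=\#(X\setminus Y)(\mathbb{F}_q)$, hence $\#Y(\mathbb{F}_q)=\#Z(\mathbb{F}_q)$ for all $q$, and Lang--Weil then forces $\dim Y=\dim Z$; note how this uses that source and target are literally the same variety, killing both failure modes (a collapsed $Y$ and a missed divisorial $Z$) at once. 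Two smaller points: Lemma \ref{normal} is a reduction statement for Conjecture \ref{M}, not for this lemma, so ``by Lemma \ref{normal} we may assume $X$ normal'' needs its own verification (the induced map on the normalization changes $Y$ and $Z$), although in all of the paper's applications $X$ is already normal; and your claim that a positive-dimensional fibre component $C$ satisfies $C\subseteq Y$ because $C\cap U$ has at most one point is harmless but cosmetically off --- a one-point intersection with $U$ is not excluded by injectivity alone, so you should argue via $\varphi^{-1}(Z)=Y$ instead.
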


N. Das used the following lemma in \cite{Das}.

\begin{lem}[{\cite[Lemma 2.1]{Das}}]\label{bir1}
Any endomorphism which satisfies the conditions of Conjecture \ref{M} is birational.
\end{lem}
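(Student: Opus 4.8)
The plan is to prove birationality by showing that the generically finite morphism $\varphi$ has degree one. First I would check that $\varphi$ is dominant. Since $\codim Y \ge 2$, the open set $U := X \setminus Y$ is dense and $\dim U = \dim X$; as $\varphi$ is injective on $U$, the image $\varphi(U)$ is constructible (Chevalley) with $\dim \varphi(U) = \dim U = \dim X$, so $\varphi(X)$ is dense and $\varphi$ is dominant. A dominant morphism between irreducible varieties of equal dimension is generically finite, so $\varphi^*$ exhibits $k(X)$ as a finite extension of $\varphi^* k(X)$ of some degree $d$, and birationality is precisely the statement $d = 1$.

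Next I would identify $d$ with a generic fiber cardinality. Working in characteristic zero, the extension $k(X)/\varphi^* k(X)$ is separable, hence $\varphi$ is generically \'etale: there is a dense open $V \subseteq \varphi(X)$ over which each fiber $\varphi^{-1}(y)$ is reduced of exactly $d$ points.

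The key step is to force every point of such a fiber into $U$. Since $\dim \varphi(Y) \le \dim Y \le \dim X - 2 < \dim X$, the set $\varphi(Y)$ is not dense, so after shrinking $V$ I may assume $V \cap \varphi(Y) = \emptyset$. Then for $y \in V$ we have $\varphi^{-1}(y) \cap Y = \emptyset$, i.e.\ $\varphi^{-1}(y) \subseteq U$, and injectivity of $\varphi$ on $U$ gives $|\varphi^{-1}(y)| \le 1$. Comparing with $|\varphi^{-1}(y)| = d$ yields $d = 1$, so $\varphi$ is birational.

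The one delicate point is the simultaneous genericity: a suitable $y$ must be a point of \'etale degree $d$ \emph{and} avoid the lower-dimensional image $\varphi(Y)$. Both are dense open conditions on the target, so a general $y$ satisfies both, and the characteristic-zero hypothesis is exactly what lets me read off $d$ as the true number of preimages. I note that for birationality alone one only needs $\dim Y < \dim X$ rather than the full $\codim Y \ge 2$; I keep the stronger hypothesis since the lemma will be invoked under the conditions of Conjecture \ref{M}.
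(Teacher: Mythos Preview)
Your argument is correct. The paper does not supply its own proof of this lemma but defers to \cite[Lemma~2.1]{Das}; your approach---establish dominance from injectivity on the dense open $U$, then use generic \'etaleness in characteristic zero to identify the degree with a generic fiber count and force $d=1$ by avoiding the lower-dimensional image $\overline{\varphi(Y)}$---is the standard one and matches what Remark~\ref{bir2} indicates about the cited proof (namely that injective $+$ dominant already suffices, with only $\dim Y<\dim X$ needed rather than $\codim Y\ge 2$).
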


\begin{rmk}\label{bir2}
Actually, the proof of Lemma \ref{bir1} in \cite{Das} implies that any injective dominant morphism of varieties over $k$ is birational.
\end{rmk}

Due to Remark \ref{bir2}, in many situations, we can use a variant of Zariski's main theorem as follows.

\begin{thm}[{\cite[{\S}9, Original form]{Mum}}]\label{ZMT}
Let $f:V \to U$ be a birational morphism of varieties over $k$. If $f$ has finite fibers and $U$ is normal, then $f$ is an open embedding.
\end{thm}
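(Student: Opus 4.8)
The plan is to deduce Conjecture \ref{M} for $X$ from the existence of a biregular automorphism of the compactification $\overline{X}$ restricting to $\varphi$, and then to close with the Ax--Grothendieck theorem. Let $\varphi\colon X\to X$ be an endomorphism and $Y\subset X$ a closed subset with $\codim Y\ge 2$ on which $\varphi$ is injective outside $Y$. First I would record the elementary reductions. Since $X$ is open in the normal variety $\overline{X}$ it is itself normal, so by Lemma \ref{bir1} the map $\varphi$ is birational, and applying Theorem \ref{ZMT} to the injective (hence finite-fibered) birational morphism $\varphi|_{X\setminus Y}\colon X\setminus Y\to X$ shows it is an open immersion. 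Setting $Z=X\setminus\varphi(X\setminus Y)$, Lemma \ref{Z} gives $\codim Z=\codim Y\ge 2$, so $\varphi\colon X\setminus Y\xrightarrow{\ \sim\ }X\setminus Z$ is an isomorphism; in particular $\varphi$ is an isomorphism in codimension one on $X$ and contracts no divisor whose generic point lies in $X$.

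Next I would pass to the compactification. Because $X$ is dense in $\overline{X}$, the morphism $\varphi$ extends to a birational self-map $\overline{\varphi}$ of $\overline{X}$, and writing $\pi\colon\overline{X}\to\overline{X}_n$ for the composite of the given divisorial contractions, the conjugate $\psi:=\pi\circ\overline{\varphi}\circ\pi^{-1}$ is a birational self-map of $\overline{X}_n$. By hypothesis $\Bir(\overline{X}_n)=\Aut(\overline{X}_n)$, so $\psi\in\Aut(\overline{X}_n)$; consequently $\psi^{*}K_{\overline{X}_n}=K_{\overline{X}_n}$, so $\psi$ preserves the discrepancy of every divisorial valuation over $\overline{X}_n$.

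The main step, which I expect to be the principal obstacle, is to promote $\psi$ to a biregular automorphism $\overline{\varphi}\in\Aut(\overline{X})$. The structural input is that the $\overline{X}_i\to\overline{X}_{i+1}$ are $K$-negative divisorial extremal contractions out of a $\mathbb{Q}$-factorial variety with canonical singularities, so the exceptional prime divisors $E_1,\dots,E_m$ of $\pi$ are divisors on $\overline{X}$ whose centers $C_j=\pi(E_j)$ have codimension $\ge 2$ and whose discrepancies over $\overline{X}_n$ are positive. From the first paragraph, any divisor contracted by $\overline{\varphi}$ or by $\overline{\varphi}^{-1}$ must lie in the boundary $\overline{X}\setminus X$, so $\overline{\varphi}$ is already an isomorphism in codimension one over $X$. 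I would then argue that, because $\psi$ preserves discrepancies and the canonical class, it permutes the centers $\{C_j\}$, and that each $\mathbb{Q}$-factorial $K$-negative divisorial contraction is determined by its center, so that $\psi$ lifts step by step through the tower $\pi$ to a biregular $\overline{\varphi}$. The hard part will be to make the permutation-of-centers claim precise: one must characterize the finite collection $\{E_j\}$ intrinsically on $\overline{X}_n$ (so that the discrepancy-preserving $\psi$ visibly respects it), control the boundary divisors that do not meet $X$, and verify compatibility of $\psi$ with the entire sequence of contractions rather than with a single one.

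Finally, granting $\overline{\varphi}\in\Aut(\overline{X})$, the conclusion is immediate. Since $\varphi$ and $\overline{\varphi}|_X$ are morphisms agreeing on a dense open subset of the reduced scheme $X$, they coincide, so $\overline{\varphi}(X)=\varphi(X)\subset X$ and $\varphi=\overline{\varphi}|_X$ is an \emph{injective} endomorphism of $X$. By Theorem \ref{A} it is bijective, and a bijective birational morphism of the normal variety $X$ is an isomorphism by Theorem \ref{ZMT}; hence $\varphi\in\Aut(X)$, which is Conjecture \ref{M} for $X$. For the last assertion it remains only to observe that the canonical model of $\overline{X}$ (whose canonical class is ample, so that every birational self-map is biregular) and a birationally superrigid Mori fiber space (by the very definition of superrigidity) both satisfy $\Bir=\Aut$, so the hypothesis holds in these two cases.
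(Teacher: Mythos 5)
Your proposal does not address the statement in question at all. The statement is Theorem \ref{ZMT}, the classical form of Zariski's main theorem (quoted by the paper from Mumford's Red Book, {\S}9, without proof): a birational morphism $f\colon V\to U$ with finite fibers onto a normal variety is an open embedding. What you have written instead is a proof sketch of Theorem \ref{main3} (Miyanishi's conjecture for open subsets of a variety admitting a tower of divisorial contractions with $\Bir(\overline{X}_n)=\Aut(\overline{X}_n)$). Worse, your argument \emph{invokes} Theorem \ref{ZMT} twice --- once to conclude that $\varphi|_{X\setminus Y}$ is an open immersion, and once at the end to upgrade a bijective birational morphism to an isomorphism --- so even if one tried to read it as a proof of the target statement, it would be circular. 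A correct proof of Theorem \ref{ZMT} would instead be the standard one: a quasi-finite separated morphism factors, by Grothendieck's form of Zariski's main theorem, as an open immersion $V\hookrightarrow \overline{V}$ followed by a finite morphism $\overline{V}\to U$; shrinking $\overline{V}$ to the closure of $V$, the finite morphism is birational onto the normal variety $U$, hence an isomorphism (a finite birational morphism onto a normal variety is an isomorphism, since $\mathcal{O}_U$ is integrally closed in its function field), and the composite is therefore an open embedding.

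As a side remark, even judged as a blind attempt at Theorem \ref{main3}, your route diverges from the paper's and has the gap you yourself flag: you propose to lift $\psi\in\Aut(\overline{X}_n)$ step by step through the tower to a biregular automorphism $\overline{\varphi}\in\Aut(\overline{X})$ via a ``permutation of centers'' claim, which you do not establish; the paper never attempts such a lift. Instead it argues by contradiction with intersection theory: assuming $\varphi$ is not an automorphism, it produces a curve $C\subset X$ contracted by $\varphi$, proves $(K_{\overline{X}},\overline{C})\ge 0$ and $(\overline{E},\overline{C})\le 0$ for the relevant exceptional divisors (Propositions \ref{nKn} and \ref{nEp}, using canonical singularities and log discrepancy comparison), and pushes $\overline{C}$ down the tower to $\overline{X}_n$, where $\Bir(\overline{X}_n)=\Aut(\overline{X}_n)$ yields the contradiction. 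But none of this bears on the statement you were asked to prove.
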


Combining Remark \ref{bir2} and Theorem \ref{ZMT}, we have the following proposition.

\begin{prop}[{\cite[Proposition 1.11]{A}}]\label{bijiso}
Let $f:V \to U$ be a bijective morphism of varieties over $k$. If $U$ is normal, then $f$ is an isomorphism.
\end{prop}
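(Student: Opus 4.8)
The plan is to deduce the statement by chaining together Remark \ref{bir2} and the form of Zariski's main theorem recorded in Theorem \ref{ZMT}; bijectivity of $f$ supplies exactly the three set-theoretic inputs these results require (dominance, birationality via injectivity, and finiteness of fibers). First I would observe that, being bijective, $f$ is in particular surjective and hence dominant, and also injective. By Remark \ref{bir2}, an injective dominant morphism of varieties over $k$ is birational, so $f$ is birational. This is the key step, since it converts the purely set-theoretic hypothesis into the scheme-theoretic property needed to invoke Theorem \ref{ZMT}.

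Next I would verify the remaining hypotheses of Theorem \ref{ZMT}. Because $f$ is bijective, each fiber is a single point and so $f$ has finite fibers, and $U$ is normal by assumption. Thus Theorem \ref{ZMT} applies and shows that $f$ is an open embedding, that is, $f$ restricts to an isomorphism of $V$ onto the open subset $f(V) \subset U$. Finally I would use surjectivity once more: since $f$ is bijective we have $f(V)=U$, so the open subset $f(V)$ equals all of $U$ and the open embedding is in fact an isomorphism onto $U$, as desired.

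I do not anticipate a substantive obstacle, because the essential work is carried out by Theorem \ref{ZMT}; the argument is really a matter of assembling the hypotheses correctly. The only points requiring any care are the elementary observations that surjectivity implies dominance and that bijectivity yields both injectivity and the finite-fiber condition, together with the normality of $U$ being available by assumption. Once these are in place the conclusion follows immediately, so the proof is short and the difficulty is concentrated entirely in the cited inputs rather than in any new computation.
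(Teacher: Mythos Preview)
Your proposal is correct and is precisely the argument the paper has in mind: the paper introduces Proposition~\ref{bijiso} with the phrase ``Combining Remark~\ref{bir2} and Theorem~\ref{ZMT}, we have the following proposition,'' and you have simply spelled out that combination. There is nothing to add or change.
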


\begin{rmk}\label{finfib}
Since we may assume that $X$ is normal by Lemma \ref{normal}, it is enough to show that $\varphi$ is bijective by Proposition \ref{bijiso}. Moreover, combining Lemma \ref{bir1} and Theorem \ref{ZMT}, we may show that $\varphi$ has finite fibers to prove that $\varphi$ is an automorphism.
\end{rmk}

\section{Dimensional argument}
\subsection{A lemma and some easy cases}
We begin by proving a useful lemma for considering Conjecture \ref{M}.

\begin{lem}\label{YZSing}
Suppose one of the conditions in Theorem \ref{BD} holds. Then, we can replace $Y$ to make $Y,\ Z \subset \Sing(X)$ with the condition that $\varphi$ is injective on $X \setminus Y$.
\end{lem}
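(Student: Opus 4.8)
The plan is to push all of the pathology into the singular locus by exploiting Theorem~\ref{BD}. Write $W = X \setminus \Sing X$. Since one of the conditions of Theorem~\ref{BD} is assumed, that theorem gives $\varphi(W) \subseteq W$ and that $\varphi|_W : W \to W$ is an isomorphism; in particular $\varphi|_W$ is bijective, so $\varphi(W) = W$ and $\varphi$ is injective on $W$. First I would record that the condition $Z \subseteq \Sing X$ comes for free once $Y \subseteq \Sing X$ is arranged: for any closed $Y' \subseteq \Sing X$ we have $W \subseteq X \setminus Y'$, hence $\varphi(X \setminus Y') \supseteq \varphi(W) = W$, so the associated $Z' := X \setminus \varphi(X \setminus Y')$ (which is closed of dimension $\dim Y'$ by Lemma~\ref{Z}) satisfies $Z' \subseteq X \setminus W = \Sing X$. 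Thus the entire task reduces to producing a closed $Y' \subseteq \Sing X$ on whose complement $\varphi$ remains injective.

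The natural first guess is $Y' = Y \cap \Sing X$, so that $X \setminus Y' = (X \setminus Y) \cup (W \cap Y)$, and I would analyze the possible collisions $\varphi(x) = \varphi(x')$ with $x \neq x'$ on this set. Both points cannot lie in $X \setminus Y$ (injectivity there) nor both in $W$ (injectivity of $\varphi|_W$); chasing the remaining case with these two facts shows that every surviving collision pairs a smooth point $w \in W \cap Y$ with a singular point $s \in \Sing X \setminus Y$ for which $\varphi(s) = \varphi(w)$. Since such an offending $s$ is singular, I can safely enlarge the bad set by these points without leaving $\Sing X$.

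Accordingly I would set $S = (\Sing X \setminus Y) \cap \varphi^{-1}(\varphi(W \cap Y))$ and take $Y' = (Y \cap \Sing X) \cup \overline{S}$. Because $\varphi|_W$ is a homeomorphism and $W \cap Y$ is closed in $W$, its image $\varphi(W \cap Y)$ is closed in $W$ and hence locally closed in $X$, so $S$ is locally closed; passing to the closure $\overline{S}$ makes $Y'$ closed, and since $S \subseteq \Sing X$ we automatically get $\overline{S} \subseteq \Sing X$, whence $Y' \subseteq \Sing X$. As $Y' \supseteq (Y \cap \Sing X) \cup S$, the domain $X \setminus Y'$ is contained in $X \setminus \bigl((Y \cap \Sing X) \cup S\bigr)$, so the only possible collisions are the smooth-versus-singular pairs identified above, and each is destroyed because its singular member lies in $S \subseteq Y'$. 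Therefore $\varphi$ is injective on $X \setminus Y'$, and $Z' \subseteq \Sing X$ by the observation of the first paragraph, which gives the desired replacement.

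I expect the main obstacle to be the closedness bookkeeping rather than the geometry: the offending locus $S$ need not be closed, so the proof must carefully verify that the collision analysis really leaves only the smooth-versus-singular pairs, and then check that replacing $S$ by $\overline{S}$ neither escapes $\Sing X$ nor reintroduces collisions. The latter is immediate since shrinking the domain cannot destroy injectivity, so the crux is the clean case analysis that isolates exactly which points must be removed.
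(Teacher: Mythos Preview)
Your argument is correct and proves the lemma as stated, but it follows a genuinely different route from the paper. The paper does not do a collision analysis at all: it replaces $Y$ by the positive-dimensional fiber locus
\[
A=\{x\in X:\dim_x\varphi^{-1}(\varphi(x))\ge 1\},
\]
uses Zariski's Main Theorem (Theorem~\ref{ZMT}) to see that $\varphi|_{X\setminus A}$ is an open embedding, and then applies the purity result Theorem~\ref{ZVP} to $\varphi|_{\varphi^{-1}(W)}:\varphi^{-1}(W)\to W$ to conclude that no divisor is contracted over the smooth locus, forcing $\varphi(A)\subset\Sing X$; combined with $\varphi(W)\subset W$ from Theorem~\ref{BD} this yields $A\subset\Sing X$, and $Z\subset\Sing X$ follows.

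Your approach is more elementary in that it avoids both Theorem~\ref{ZMT} and Theorem~\ref{ZVP}, relying only on the bijectivity of $\varphi|_W$ coming from Theorem~\ref{BD} and pure set manipulation. The trade-off is that the paper's replacement carries extra structure: with $Y=A$ one knows that every point of $Y$ lies on a positive-dimensional fiber, hence $\dim\overline{\varphi(Y)}<\dim Y$, and that $\varphi$ is an open embedding off $Y$. These facts are used verbatim later (e.g.\ in the proof of Proposition~\ref{dimeasy}(3) and of Lemma~\ref{ZSing}), so while your $Y'=(Y\cap\Sing X)\cup\overline{S}$ suffices for Lemma~\ref{YZSing} itself, it would not slot into those subsequent arguments without further work.
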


To prove Lemma \ref{YZSing}, we use the following fact.

\begin{thm}[{\cite[III. 9. Proposition 1]{Mum}}]\label{ZVP}
Let $X$ be a factorial variety over $k$, and $f:X^{\prime} \to X$ be a birational morphism from a variety $X^{\prime}$ over $k$. Then, there exists an open subset $U$ of $X$ with the following conditions.

\begin{enumerate}
\item $f|_{f^{-1}(U)}:f^{-1}(U) \to U$ is an isomorphism,
\item for any irreducible component $E$ of $X^{\prime} \setminus f^{-1}(U)$, we have $\dim E=\dim X-1$, and $\dim \overline{f(E)} \le \dim X -2$.
\end{enumerate}
\end{thm}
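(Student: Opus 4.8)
The plan is to take $U$ to be the largest open subset of $X$ over which $f$ restricts to an isomorphism (the union of all opens with this property; since being an isomorphism is local on the target, this union again has the property). As $f$ is birational it is an isomorphism over a dense open set, so $U \neq \emptyset$, and condition (1) holds by construction. It then remains to analyze the \emph{exceptional locus} $\mathrm{Ex} := X' \setminus f^{-1}(U)$ and to show, for each irreducible component $E$, that $\codim_{X'} E = 1$ and $\codim_X \overline{f(E)} \ge 2$ (note $\dim X' = \dim X$ since $f$ is birational, so these are equivalent to the stated dimension formulas).

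First I would establish that $f$ is a local isomorphism over every codimension-$1$ point of $X$. As $X$ is factorial it is normal, hence regular in codimension $1$, so for $x \in X$ with $\dim \mathcal{O}_{X,x}=1$ the local ring $A := \mathcal{O}_{X,x}$ is a discrete valuation ring with fraction field $K = k(X) = k(X')$. If $x' \in X'$ lies over $x$, then $f^{\#}$ exhibits $A$ as dominated by $\mathcal{O}_{X',x'} \subseteq K$; since a valuation ring is maximal for the domination order among local subrings of its fraction field, this forces $A = \mathcal{O}_{X',x'}$. Separatedness of $X'$ (valuative criterion) shows the fibre over such an $x$ is a single point, and a spreading-out argument upgrades the isomorphism of local rings to an isomorphism of $f$ over a neighbourhood, so $x \in U$ whenever $x$ lies in the image. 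Consequently, for a component $E$ of $\mathrm{Ex}$ the image $f(E) \subseteq X \setminus U$ is constructible and meets no codimension-$1$ point of $X$; its generic point lies in $f(E)$ by Chevalley's theorem, so $\overline{f(E)}$ cannot have codimension $1$, giving $\codim_X \overline{f(E)} \ge 2$. (This step uses only normality.)

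The crux, where factoriality is genuinely needed, is the \emph{purity} statement that every component $E$ of $\mathrm{Ex}$ has codimension $1$ in $X'$. I would argue by contradiction: let $e$ be the generic point of a component with $\codim_{X'} E \ge 2$, put $x = f(e)$, $A = \mathcal{O}_{X,x}$, $B = \mathcal{O}_{X',e}$, so that $A \subsetneq B \subseteq K$ with $B$ dominating $A$ and $A$ a UFD. Choosing $t \in B \setminus A$ and writing $t = g/h$ in lowest terms with $g,h \in A$ coprime, the denominator $h$ is a non-unit of $A$, hence lies in $\mathfrak{m}_B$; let $\mathfrak{q}$ be a minimal prime of $B$ over $(h)$, which has height $1$ by Krull's principal ideal theorem and so is properly contained in $\mathfrak{m}_B$. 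The corresponding point $e_1$ is then a codimension-$1$ point of $X'$ with $e \in \overline{\{e_1\}}$. Setting $A_1 = \mathcal{O}_{X,f(e_1)}$ and $B_1 = B_{\mathfrak{q}} = \mathcal{O}_{X',e_1}$, one checks that $A_1$ is a localization of the UFD $A$, that $g,h$ remain coprime in $A_1$ with $h \in \mathfrak{m}_{A_1}$, and hence that $t = g/h \in B_1 \setminus A_1$; thus $f$ fails to be a local isomorphism at $e_1$, i.e. $e_1 \in \mathrm{Ex}$. Since $\mathrm{Ex}$ is closed and $e \in \overline{\{e_1\}} \subseteq \mathrm{Ex}$ with $\overline{\{e_1\}}$ of codimension $1$, the point $e$ cannot be the generic point of a codimension-$\ge 2$ component, a contradiction.

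I expect this purity step to be the main obstacle: the passage from the strict inclusion $A \subsetneq B$ to an honest codimension-$1$ exceptional point is exactly where unique factorization enters, through the ability to write rational functions in lowest terms and invoke Krull's theorem on the denominator. The normal, or even $\mathbb{Q}$-factorial, case alone would not supply a genuine principal denominator, so factoriality is essential here. The remaining bookkeeping — density of $U$, the spreading-out of local isomorphisms, and preservation of coprimality under the relevant localizations — is routine.
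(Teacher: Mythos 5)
The paper offers no proof of this statement at all --- it is quoted verbatim from Mumford's Red Book (III.9, Proposition 1, the theorem of van der Waerden), so your attempt can only be measured against the classical argument, which you have in essence reconstructed: the maximal open $U$, the valuation-theoretic treatment of codimension-$1$ points of $X$, and above all the UFD mechanism (write $t=g/h$ in lowest terms, take a minimal prime of $B$ over $(h)$, invoke Krull's principal ideal theorem, and check that coprimality survives localization) are exactly the classical ingredients, and your verifications of the localization step ($A_1$ a localization of $A$ is again a UFD, $g,h$ stay coprime, $h\in\mathfrak{m}_{A_1}$, hence $t\in B_1\setminus A_1$) and of the final topological contradiction ($\overline{\{e\}}\subset\overline{\{e_1\}}\subset\mathrm{Ex}$ forces the component through $e$ to have codimension at most $1$) are correct.

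There is, however, one genuinely unjustified step, and it is not cosmetic: at the start of the purity argument you take the generic point $e$ of a component of $\mathrm{Ex}=X'\setminus f^{-1}(U)$ and assert that $A=\mathcal{O}_{X,f(e)}$ is \emph{strictly} contained in $B=\mathcal{O}_{X',e}$, so that some $t\in B\setminus A$ exists. With $U$ defined as the maximal open over which $f$ is an isomorphism, membership of $e$ in $\mathrm{Ex}$ does not by itself say anything about the local rings at $e$: a priori $f$ could be a local isomorphism at $e$ (so $A=B$) and yet fail to be an isomorphism over every neighbourhood of $x=f(e)$ for global reasons --- extra points of $X'$, far from $e$, mapping into each neighbourhood of $x$, or points missing from the image. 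Your whole engine never starts on such a ``phantom'' exceptional point, so the proof as written is incomplete. The claim is true, and it can be repaired with the very tool you already use for codimension-$1$ points, the uniqueness half of the valuative criterion of separatedness: if $A=B$, spread out to an isomorphism $f|_V\colon V\to W$ with $e\in V$, $x\in W$; for any $q\in X'$ with $w:=f(q)\in W$, the ring $\mathcal{O}_{X',q}$ dominates $\mathcal{O}_{X,w}=\mathcal{O}_{X',p_w}$ where $p_w=(f|_V)^{-1}(w)$, and a valuation ring of $K$ dominating $\mathcal{O}_{X',q}$ then dominates $\mathcal{O}_{X',p_w}$ as well, so having two centers on the separated scheme $X'$ forces $q=p_w\in V$. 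Hence $f^{-1}(W)=V$, so $x\in U$ and $e\notin\mathrm{Ex}$ --- i.e.\ every point of $\mathrm{Ex}$ really does satisfy $A\neq B$. (The same care is needed, in milder form, in your ``routine spreading-out'' over codimension-$1$ points, where one must also rule out faraway sheets of $X'$ mapping into small neighbourhoods; there it follows quickly because a point of $X'\setminus V$ mapping to a generization of $x$ would have to map to the generic point of $X$.) With this lemma inserted, your proof is a correct rendering of the classical van der Waerden argument.
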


\begin{proof}[Proof of Lemma \ref{YZSing}]
Let $A$ be the set of $x \in X$ such that the irreducible component of $\varphi^{-1}(\{\varphi(x)\})$ which contains $x$ is of dimension at least $1$. Then, we have $A \subset Y$, $A$ is a closed subset of $X$ by \cite[Exercise II. 3.22(d)]{Har}, and $\varphi|_{X \setminus A}:X \setminus A \to X$ has finite fibers.  By Theorem \ref{ZMT}, $\varphi|_{X \setminus A}$ is an open embedding, thus we can replace $Y$ with $A$.

Now, we apply Theorem \ref{ZVP} to $\varphi|_{\varphi^{-1}(W)}:\varphi^{-1}(W) \to W$. Since $\varphi|_{\varphi^{-1}(W)}$ does not contract any divisor, we have $\varphi(Y) \subset \Sing(X)$. By Theorem \ref{BD}, we know $\varphi(W) \subset W$, thus we have $Y \subset \Sing(X)$. Also, this implies $Z \subset \Sing(X)$.
\end{proof}

By Lemma \ref{YZSing}, we find it easy to prove Conjecture \ref{M} in some cases as follows.

\begin{prop}\label{dimeasy}
If one of the followings holds, then Conjecture \ref{M} holds for $X$.

\begin{enumerate}
\item $\dim \Sing(X) \le 0$,
\item $\dim X \le 2$,
\item $\varphi$ is surjective.
\end{enumerate}
\end{prop}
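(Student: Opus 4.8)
The plan is to reduce all three cases to a single statement, namely that $\varphi$ contracts nothing, and then to win by a dimension count. By Lemma \ref{normal} and Remark \ref{finfib} I may assume $X$ is normal and it suffices to prove that $\varphi$ has finite fibres, and by Lemma \ref{bir1} the map $\varphi$ is birational. Imitating the opening of the proof of Lemma \ref{YZSing}, I let $A \subset X$ be the set of points lying on a positive-dimensional component of their own fibre; then $A$ is closed, $A \subset Y$, and $\varphi|_{X \setminus A}$ has finite fibres, so by Theorem \ref{ZMT} it is an open embedding. Replacing $Y$ by $A$, I may assume $Y = A$ is exactly the locus contracted by $\varphi$, still of codimension at least $2$. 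The structural point is that $A$ is swept out by positive-dimensional fibres, so $A \neq \emptyset$ forces $\dim A \ge 1$; thus the whole proposition reduces to showing $A = \emptyset$ in each case, after which Remark \ref{finfib} finishes.

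Two of the three cases are then immediate dimension bookkeeping. For case (2), $\dim X \le 2$ together with $\codim Y \ge 2$ gives $\dim A \le \dim Y \le \dim X - 2 \le 0$, incompatible with $\dim A \ge 1$, so $A = \emptyset$ (when $\dim X = 1$ this merely records $Y = \emptyset$, and one may alternatively cite Theorem \ref{A} directly). For case (1), I would apply Lemma \ref{YZSing} to arrange $Y = A \subset \Sing(X)$, whence $\dim A \le \dim \Sing(X) \le 0$, again contradicting $\dim A \ge 1$ unless $A = \emptyset$.

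Case (3), $\varphi$ surjective, is the substantive one, since here $\dim Y$ is not a priori bounded. Now $\varphi|_{X \setminus A}$ is an isomorphism onto $X \setminus Z$, where $Z = X \setminus \varphi(X \setminus A)$, and Lemma \ref{Z} gives $\dim A = \dim Z$ whenever $Z \neq \emptyset$. Surjectivity forces $X = (X \setminus Z) \cup \varphi(A)$, hence $Z \subset \varphi(A)$; but $\varphi$ contracts a positive-dimensional subvariety through every point of $A$, so $\dim \varphi(A) \le \dim A - 1$, giving $\dim Z \le \dim A - 1 < \dim A = \dim Z$, a contradiction. The only escape is $Z = \emptyset$, in which case $\varphi|_{X \setminus A} : X \setminus A \to X$ is a surjective open embedding, hence an isomorphism; composing its inverse with the inclusion $X \setminus A \hookrightarrow X$ produces an injective endomorphism of $X$ whose image is the proper open subset $X \setminus A$, contradicting Theorem \ref{A} unless $A = \emptyset$.

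I expect the surjective case to be the main obstacle. The inequality $\dim \varphi(A) \le \dim A - 1$ must be justified component by component via the fibre-dimension theorem, since $A$ need not be irreducible or equidimensional, and the degenerate possibility $Z = \emptyset$, which the crude dimension count does not exclude, has to be disposed of separately through Ax's theorem. By contrast, cases (1) and (2) are routine once Lemma \ref{YZSing} and the codimension hypothesis are in hand.
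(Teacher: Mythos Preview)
Your proof is correct and follows essentially the paper's approach: replace $Y$ by the contracted locus $A$ and force $A=\emptyset$ in each case, invoking Lemma~\ref{YZSing} for (1) and the dimension clash with Lemma~\ref{Z} for (3). Two small deviations are worth noting. For (2) you bound $\dim A\le\dim X-2\le 0$ directly from the codimension hypothesis, whereas the paper instead observes that normality gives $\dim\Sing(X)\le 0$ when $\dim X\le 2$ and then quotes case (1); your route is marginally more self-contained since it does not pass through Lemma~\ref{YZSing}. For (3) the paper asserts $\varphi(Y)=Z$ outright and stops after deriving $\dim Y>\dim Z$, without isolating the possibility $Z=\emptyset$ that you dispose of via Theorem~\ref{A}; your extra step is not redundant, since the inclusion $\varphi(A)\subset Z$ underlying the paper's equality is not obvious without some connectedness-of-fibres input.
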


\begin{proof}
We replace $Y$ with $A$ as in the proof of Lemma \ref{YZSing}.\\
\ \\
$(1)$ By Lemma \ref{YZSing}, we have $\dim Y \le 0$. This implies that $\varphi$ has finite fibers, thus $\varphi$ is an automorphism by Remark \ref{finfib}.\\
\ \\
$(2)$ We may assume $X$ is normal by Lemma \ref{normal}. Then, we have $\codim \Sing(X) \ge 2$. Since we suppose $\dim X \le 2$, we have $\dim \Sing(X) \le 0$, thus $\varphi$ is an automorphism by $(1)$.\\
\ \\
$(3)$ Since we have replaced $Y$ with $A$ as in the proof of Lemma \ref{YZSing}, for all $y$ in $Y$, $\varphi^{-1}(\{\varphi(y)\})$ is of dimension at least $1$. By the definition of $Z$ and surjectivity of $\varphi$, we have $\varphi(Y)=Z$. If $Z$ is not empty, then we have $\dim Y>\dim Z$, but this contradicts Lemma \ref{Z}.
\end{proof}

\subsection{Proof of Theorem \ref{main1}}
Let $\varphi:X \to X$ be an endomorphism of a normal variety $X$ over $k$, and let $Y$ be a closed subset of $X$. Assume that $\varphi$ is injective on $X \setminus Y$, $\codim Y=2$. Also, we assume one of the conditions in Theorem \ref{main1}, thus $\varphi|_W:W \to W$ is an isomorphism by Theorem \ref{BD}, where $W=X \setminus \Sing(X)$. We can replace $Y$ with $A$ as in the proof of Lemma \ref{YZSing}, thus if $\codim A \ge 3$, then there is nothing to prove. From now on, we assume $\codim A=2$, and we replace $Y$ with $A$. Note that this assumption implies $\codim \Sing(X)=2$ by Lemma \ref{YZSing}, and $\codim Z=2$ by Lemma \ref{Z}.

\begin{lem}\label{ZSing}
$\Sing(X) \cap Z$ is of codimension at least $3$.
\end{lem}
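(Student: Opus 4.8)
The plan is to reduce the statement to a purely birational fact about the codimension-two components of $\Sing(X)$, and then to rule out one pathology using the three hypotheses. First I would record, exactly as in the proof of Lemma \ref{YZSing}, that $\varphi(\Sing(X)\setminus Y)\subseteq \Sing(X)$: if some $s\in \Sing(X)\setminus Y$ had $\varphi(s)\in W$, then since $\varphi|_W$ is surjective by Theorem \ref{BD} there would be $w\in W$ with $\varphi(w)=\varphi(s)$, contradicting injectivity of $\varphi$ on $X\setminus Y$. Hence $Z=\Sing(X)\setminus\varphi(\Sing(X)\setminus Y)\subseteq \Sing(X)$, so $\Sing(X)\cap Z=Z$, and it suffices to prove that $Z$ has no irreducible component of dimension $n-2$, where $n=\dim X$ (recall $\dim\Sing(X)=n-2$ under the standing assumption).

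Then I would analyse how $\varphi$ moves the finitely many $(n-2)$-dimensional components $S_1,\dots,S_r$ of $\Sing(X)$. For each $S_i$ not contained in $Y$, the set $S_i\setminus Y$ is dense in $S_i$ and $\varphi|_{S_i\setminus Y}$ is injective, so $\overline{\varphi(S_i\setminus Y)}$ is again a codimension-two component $S_{\sigma(i)}$ of $\Sing(X)$. The assignment $\sigma$ is injective: if $S_i,S_{i'}$ had the same image $S_j$, then $\varphi(S_i\setminus Y)$ and $\varphi(S_{i'}\setminus Y)$ would be dense constructible subsets of the irreducible $S_j$, hence would meet, producing two distinct points of $X\setminus Y$ with the same image and contradicting injectivity. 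Since $\sigma$ is an injection of a finite set into itself, its image is exactly the set of components dominated by $\varphi$, and the undominated ones are precisely the $(n-2)$-dimensional components of $Z$. Counting, the number of such components of $Z$ equals $r$ minus the size of the image of $\sigma$, which equals the number of codimension-two components of $\Sing(X)$ contained in $Y=A$, that is, contracted by $\varphi$. Thus the lemma is equivalent to the assertion that $\varphi$ contracts no codimension-two component of $\Sing(X)$.

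Finally I would rule out such a contraction using the hypotheses. If $\varphi$ is surjective, then $Z\subseteq\varphi(A)$; since $\dim A=n-2$ and every fibre of $\varphi|_A$ is positive-dimensional, $\dim\varphi(A)\le n-3$, so $Z$ has no $(n-2)$-dimensional component. In the $\mathbb{Q}$-factorial case I would invoke the rigidity principle that a proper birational morphism onto a $\mathbb{Q}$-factorial variety has exceptional locus of pure codimension one; combined with the fact that $A$ has codimension two and therefore contains no divisor, so that $\varphi$ contracts no divisor, this forces the exceptional locus, and hence $A$, to have codimension at least three. The local-complete-intersection case I would expect to follow from the analogous non-existence of the relevant small contractions in that setting. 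I expect the main obstacle to be precisely this last step: $\varphi$ need not be proper, so the $\mathbb{Q}$-factorial and lci rigidity statements cannot be quoted verbatim, and the delicate point will be to run them locally over the image $\varphi(A)$ (which has codimension at least three) or to pass to a proper model, while keeping the putative contracted codimension-two component of $\Sing(X)$ in view.
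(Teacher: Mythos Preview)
Your reduction is sound: since $Y,Z\subset\Sing(X)$ after the replacement $Y=A$, the lemma amounts to showing $Z$ has no codimension-two component, and your counting argument correctly identifies this with the assertion that no codimension-two component $S_i$ of $\Sing(X)$ lies in $A$. (The paper in fact runs this same equivalence in the other direction, in the paragraph immediately following the lemma.)

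The gap is in your step 3. The surjective case is fine, but in the $\mathbb{Q}$-factorial and lci cases you are essentially trying to prove the conclusion of Theorem~\ref{main1} directly, and the tool you reach for is not available. The purity statement you want---that a birational morphism to a $\mathbb{Q}$-factorial target which contracts no divisor has empty exceptional locus---is only known for \emph{proper} morphisms; the factorial, non-proper version (Theorem~\ref{ZVP}) has already been spent in the proof of Lemma~\ref{YZSing}, where it was applied over the smooth locus $W$. Over $\Sing(X)$, which is exactly where $A$ sits, there is no such principle for a non-proper map to a merely $\mathbb{Q}$-factorial target, and neither localising near $\overline{\varphi(A)}$ nor passing to a proper model supplies one without new input. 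The lci case is left as a hope.

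The paper's proof avoids the case-split entirely and uses a genuinely different idea. Take a resolution $\pi\colon\widetilde{X}\to X$ that is an isomorphism over $W$, so that $\Sing(X)$ is covered by the images $\pi(E)$ of the $\pi$-exceptional prime divisors, and it suffices to show $\codim\pi(E)\ge 3$ whenever $\pi(E)\subset Z$. Lift $\varphi$ to a birational self-map $\widetilde{\varphi}$ of $\widetilde{X}$ with $\pi\circ\widetilde{\varphi}=\varphi\circ\pi$; smoothness of $\widetilde{X}$ guarantees $\widetilde{\varphi}^{-1}$ is defined on a dense open $E_0\subset E$, and the identity $\varphi\circ(\pi\circ\widetilde{\varphi}^{-1})=\pi$ on $E_0$ forces $\pi(\widetilde{\varphi}^{-1}(E_0))\subset\varphi^{-1}(Z)\subset Y$, hence $\pi(E_0)\subset\varphi(Y)$ and $\pi(E)\subset\overline{\varphi(Y)}$. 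Since $Y=A$ has positive-dimensional fibre-components, $\overline{\varphi(Y)}$ has codimension at least $3$. This argument uses only the conclusion of Theorem~\ref{BD}---that $\varphi|_W$ is an isomorphism---and is therefore uniform across all the hypotheses.
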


\begin{proof}
Let $\pi:\widetilde{X} \to X$ be a resolution of singularities of $X$ such that $\pi|_{\pi^{-1}}(W):\pi^{-1}(W) \to W$ is an isomorphism. It suffices to prove that for any $\pi$-exceptional prime divisor $E$, $\pi(E) \cap Z$ is of codimension at least $3$. If $\pi(E) \not\subset Z$, then the assertion is obvious, thus we may assume $\pi(E) \subset Z$.

We prove $\codim \pi(E) \ge 3$. There is a birational automorphism $\widetilde{\varphi}$ of $\widetilde{X}$ such that $\pi \circ \widetilde{\varphi}=\varphi \circ \pi$. Since $\widetilde{X}$ is smooth, there exists an open dense subset $E_0$ of $E$ such that the inverse birational automorphism $\widetilde{\varphi}^{-1}$ of $\widetilde{\varphi}$ is defined on $E_0$. By $\pi(E_0) \subset \pi(E) \subset Z$, we have $(\pi \circ \widetilde{\varphi}^{-1})(E_0) \subset \varphi^{-1}(Z)=Y$. This implies $\pi(E_0) \subset \varphi(Y)$, and by taking the closures, we have $\pi(E) \subset \overline{\pi(E_0)} \subset \overline{\varphi(Y)}$. Since we have replaced $Y$ with $A$, $\overline{\varphi(Y)}$ is of codimension more than $\codim Z=2$. Hence, we have $\codim \pi(E) \ge 3$.
\end{proof}

Let $S_1, \cdots, S_n$ be the irreducible components of $\Sing(X)$ of codimension $2$. By Lemma \ref{ZSing}, we have $S_1, \cdots, S_n \not\subset Z$, thus for all $i \in \{1, \cdots, n\}$, there uniquely exists $j \in \{1, \cdots, n\}$ such that $\overline{\varphi(S_j)}=S_i$. Note that uniqueness of $j$ is clear by the injectivity of $\varphi$ on $X \setminus Y$. In particular, $S_1, \cdots, S_n$ are not contracted by $\varphi$, thus they are not contained in $Y$. By Lemma \ref{YZSing}, this implies $\codim Y > \codim \Sing(X)=2$, and this is a contradiction.

\subsection{Proof of Corollary \ref{3fld}}
By Lemma \ref{YZSing} and Theorem \ref{main1}, we may assume $\codim Y \ge 3$. This implies $\dim Y \le 0$ since we consider a threefold $X$, thus $\varphi$ has finite fibers. Hence $\varphi$ is an automorphism by Remark \ref{finfib}.

\subsection{Proof of Theorem \ref{main2}}
We replace $Y$ with $A$ as in the proof of Lemma \ref{YZSing}, and we take $\pi:\widetilde{X} \to X$ as in the proof of Theorem \ref{main1}. By Lemma \ref{YZSing}, we have $\dim Y \le \dim \Sing(X) \le 1$. If $\dim Y \le 0$, then $\varphi$ has finite fibers, thus $\varphi$ is an automorphism by Remark \ref{finfib}. If $\dim Y=1$, then we prove the following.

\begin{lem}\label{ZSing2}
Let $E$ be a $\pi$-exceptional prime divisor. If $\pi(E)$ is contained in $Z$, then $\pi(E)$ is a point.
\end{lem}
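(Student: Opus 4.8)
The plan is to rerun the argument of Lemma~\ref{ZSing} almost verbatim, but tracking dimensions rather than codimensions. Since $\varphi$ is birational by Lemma~\ref{bir1}, I would first produce a birational automorphism $\widetilde{\varphi}=\pi^{-1}\circ\varphi\circ\pi$ of $\widetilde{X}$ satisfying $\pi\circ\widetilde{\varphi}=\varphi\circ\pi$, so that, as rational maps, $\pi\circ\widetilde{\varphi}^{-1}=\varphi^{-1}\circ\pi$. Assume $\pi(E)\subset Z$; the goal is to show $\dim\pi(E)=0$.

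Because $\widetilde{X}$ is smooth, the indeterminacy locus of $\widetilde{\varphi}^{-1}$ has codimension at least $2$, hence cannot contain the prime divisor $E$; thus there is an open dense subset $E_0\subset E$ on which $\widetilde{\varphi}^{-1}$ is defined. From $\pi(E_0)\subset\pi(E)\subset Z$ together with $\varphi^{-1}(Z)=Y$ I would deduce $(\pi\circ\widetilde{\varphi}^{-1})(E_0)\subset Y$. Applying $\varphi$ and using $\varphi\circ\pi\circ\widetilde{\varphi}^{-1}=\pi\circ\widetilde{\varphi}\circ\widetilde{\varphi}^{-1}=\pi$ on a dense open of $E_0$ gives $\pi(E_0)\subset\varphi(Y)$, and taking closures yields $\pi(E)\subset\overline{\pi(E_0)}\subset\overline{\varphi(Y)}$.

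The final step is the dimension count. Since we have replaced $Y$ by $A$, every point of $Y$ lies on a positive-dimensional irreducible component of its $\varphi$-fiber, and that whole component again lies in $Y$; hence every fiber of $\varphi|_Y$ is positive-dimensional and $\dim\overline{\varphi(Y)}\le\dim Y-1=0$. Therefore $\pi(E)$ is contained in a finite set, and being the continuous image of the irreducible variety $E$ it is irreducible, so $\pi(E)$ is a single point, as desired.

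I expect the only delicate points to be the two facts that make the reduction work. The first is that $E_0$ is dense in $E$, which uses the smoothness of $\widetilde{X}$ to force the indeterminacy of $\widetilde{\varphi}^{-1}$ into codimension at least $2$. The second, which is the genuine input here, is the fiber-dimension drop $\dim\overline{\varphi(Y)}\le\dim Y-1$: this rests on the replacement of $Y$ by $A$ guaranteeing that the positive-dimensional fiber components through points of $Y$ are themselves contained in $Y$, so that $\varphi|_Y$ has only positive-dimensional fibers. Neither point requires new ingredients beyond Lemmas~\ref{YZSing} and~\ref{Z} and the setup already fixed for Theorem~\ref{main2}.
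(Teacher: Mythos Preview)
Your argument is correct and is essentially the paper's own proof: the paper simply says ``It is the same as the proof of Lemma~\ref{ZSing}'' and singles out precisely your second delicate point, namely that with $Y$ replaced by $A$ and $\dim Y=1$, $Y$ is the union of the $\varphi$-contracted curves, which gives $\dim\overline{\varphi(Y)}\le 0$. The only cosmetic difference is that you phrase the conclusion in terms of dimensions rather than codimensions.
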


\begin{proof}
It is the same as the proof of Lemma \ref{ZSing}. Note that $\dim Y=1$ implies that $Y$ is the union of all curves on $X$ which are contracted by $\varphi$.
\end{proof}

By Lemma \ref{ZSing2}, we deduce that there are no $\pi$-exceptional prime divisors mapping to a $1$-dimensional irreducible component of $Z$. However, this contradicts $Z \subset \Sing(X)$ since $\pi|_{\pi^{-1}(W)}:\pi^{-1}(W) \to W$ is an isomorphism.

\section{Birational algebraic argument}

\subsection{Some easy cases}
There are some cases that we can deduce easily Conjecture \ref{M} holds. They may seem to be trivial, but they give us some ideas for proof in more general cases. One of the most trivial cases is the following.

\begin{prop}\label{bsrigid}
Let $\overline{X}$ be a complete variety, and $X$ be an open subset of $\overline{X}$. If $\Bir(\overline{X})=\Aut(\overline{X})$ holds, then Conjecture \ref{M} holds for $X$.

In particular, Conjecture \ref{M} holds for $X$ which is an open subset of a canonical model, or a birationally superrigid Mori fiber space.
\end{prop}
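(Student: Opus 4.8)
The plan is to extend $\varphi$ to a birational self-map of the completion $\overline{X}$, promote it to a genuine automorphism using the hypothesis $\Bir(\overline{X})=\Aut(\overline{X})$, and then transfer bijectivity back down to $X$. First I would observe that $\varphi$ satisfies the hypotheses of Conjecture \ref{M}, so by Lemma \ref{bir1} it is birational. Since $X$ is a nonempty open subset of the integral variety $\overline{X}$, it is dense and $k(X)=k(\overline{X})$; thus $\varphi$ induces a birational self-map $\overline{\varphi}\in\Bir(\overline{X})$, which by hypothesis lies in $\Aut(\overline{X})$ and is therefore an honest automorphism of $\overline{X}$.

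Next I would show that $\varphi$ coincides with the restriction $\overline{\varphi}|_X$. Composing with the inclusion $X\hookrightarrow\overline{X}$, both $\varphi$ and $\overline{\varphi}|_X$ are morphisms $X\to\overline{X}$ representing the same birational map, so they agree on a dense open subset of $X$. As $X$ is reduced and $\overline{X}$ is separated, the locus where they agree is closed and dense, hence all of $X$; therefore $\varphi=\overline{\varphi}|_X$, and in particular $\overline{\varphi}(X)=\varphi(X)\subseteq X$.

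The remaining, and essential, point is to upgrade this inclusion to the equality $\overline{\varphi}(X)=X$. Let $D=\overline{X}\setminus X$ be the closed boundary. Taking complements in the inclusion $\overline{\varphi}(X)\subseteq X$ and using that the automorphism $\overline{\varphi}$ is a homeomorphism gives $D\subseteq\overline{\varphi}(D)$. Iterating produces an ascending chain $D\subseteq\overline{\varphi}(D)\subseteq\overline{\varphi}^2(D)\subseteq\cdots$ of closed subsets of the Noetherian space $\overline{X}$; this chain stabilizes, and applying a suitable power of $\overline{\varphi}^{-1}$ yields $\overline{\varphi}(D)=D$. Hence $\overline{\varphi}(X)=X$, so $\varphi=\overline{\varphi}|_X\colon X\to X$ is the restriction of an automorphism to an open subset carried onto itself, and is therefore an isomorphism. (If one prefers, once bijectivity of $\varphi$ is established one may instead invoke Proposition \ref{bijiso}, reducing to the normal case by Lemma \ref{normal}.) I expect this surjectivity step to be the main obstacle: birationality alone controls $\varphi$ only on a dense open set, and it is precisely the global automorphism property of $\overline{\varphi}$, together with the ascending chain condition, that forces $\overline{\varphi}(X)=X$.

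For the final assertion, it suffices to verify the hypothesis $\Bir(\overline{X})=\Aut(\overline{X})$ in the two listed cases. A canonical model is projective, hence complete, and any birational self-map of a variety with ample canonical class is an automorphism, so $\Bir=\Aut$ there; a birationally superrigid Mori fiber space is complete and satisfies $\Bir=\Aut$ by definition. In either case the main statement applies and Conjecture \ref{M} holds for $X$.
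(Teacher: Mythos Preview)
Your proof is correct and follows the same strategy as the paper: extend $\varphi$ to $\overline{\varphi}\in\Bir(\overline{X})=\Aut(\overline{X})$ and deduce that $\varphi$ is an automorphism of $X$. The paper's proof is a three-line sketch that jumps directly from ``$\overline{\varphi}$ is an automorphism of $\overline{X}$'' to ``$\varphi$ is an automorphism of $X$''; your Noetherian ascending-chain argument showing $\overline{\varphi}(X)=X$ supplies exactly the step the paper leaves implicit.
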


\begin{proof}
We can regard the endomorphism $\varphi$ of $X$ as a birational automorphism $\overline{\varphi}$ of $\overline{X}$. By assumption, any birational automorphism of $\overline{X}$ is an automorphism of $\overline{X}$, thus $\overline{\varphi}$ is an automorphism of $\overline{X}$. In particular, $\varphi$ is an automorphism of $X$.
\end{proof}

In the conditions of Theorem \ref{BD}, the first case that $\varphi$ is surjective is treated in Lemma \ref{dimeasy}$(3)$, and the second case that $X$ is locally a complete intersection is treated in \cite{BD}, \cite{Das}. As for the fourth case that $\codim Y \ge 3$, we may think that this case is essential by Theorem \ref{main1}. Thus, we mainly treat the third case that $X$ is $\mathbb{Q}$-factorial in this section. When we consider $\mathbb{Q}$-factorial normal projective varieties, intersection theory is a very powerful tool and gives us the following.

\begin{prop}[{\cite[Theorem 1.3]{A}}]\label{ample}
Let $\overline{X}$ be a $\mathbb{Q}$-factorial normal projective variety, and $X$ be an open subset of $\overline{X}$ with $\codim(\overline{X} \setminus X) \ge 2$. If the canonical divisor $K_{\overline{X}}$ of $\overline{X}$ is ample or anti-ample, then Conjecture \ref{M} holds for $X$.

In particular, Conjecture \ref{M} holds for $X$ which is an open subset of a Fano variety.
\end{prop}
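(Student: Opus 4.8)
The plan is to pass from the endomorphism $\varphi$ of $X$ to a birational self-map of the projective model $\overline{X}$, to show that $\overline{X}$ being an isomorphism in codimension one together with the (anti)ampleness of $K_{\overline{X}}$ forces this self-map to be biregular, and finally to restrict back to $X$.

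First I would record, using Lemma \ref{bir1}, that $\varphi$ is birational, so it defines a birational self-map $\overline{\varphi} \in \Bir(\overline{X})$; here I use $\codim(\overline{X} \setminus X) \ge 2$ to regard rational self-maps of $X$ as rational self-maps of $\overline{X}$. Arguing exactly as in the proof of Lemma \ref{YZSing}, I would replace $Y$ by the locus $A$ on which $\varphi$ has positive-dimensional fibres; since $A \subset Y$ we keep $\codim A \ge 2$, and by Theorem \ref{ZMT} the restriction $\varphi|_{X \setminus A}$ is an open immersion onto $X \setminus Z$, where $\codim Z \ge 2$ by Lemma \ref{Z}. Thus $\varphi$ restricts to an isomorphism $X \setminus A \xrightarrow{\sim} X \setminus Z$ between complements of two subsets of codimension at least $2$. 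As $\overline{X} \setminus X$, $A$ and $Z$ all have codimension at least $2$, the prime divisors of $\overline{X}$ correspond bijectively under $\overline{\varphi}$ to the prime divisors of $\overline{X}$; in other words $\overline{\varphi}$ is an isomorphism in codimension one, contracting and extracting no divisor. Note that this step does not use the ampleness hypothesis at all.

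The second half of the argument is where ampleness enters; assume $K_{\overline{X}}$ is ample (the anti-ample case is identical after replacing $K_{\overline{X}}$ by $-K_{\overline{X}}$). Take a resolution $p, q \colon \widetilde{X} \to \overline{X}$ of $\overline{\varphi}$, so that $q = \overline{\varphi} \circ p$ as rational maps. Since $\overline{X}$ is $\mathbb{Q}$-factorial both pullbacks of $K_{\overline{X}}$ make sense, and the exceptional divisors for $p$ and for $q$ coincide because $\overline{\varphi}$ is an isomorphism in codimension one; pushing the relation $q^{*}K_{\overline{X}} = K_{\widetilde{X}} - \sum b_i E_i$ forward by $p$ and using $p_{*}K_{\widetilde{X}} = K_{\overline{X}}$ and $p_{*}E_i = 0$ then gives $\overline{\varphi}^{*}K_{\overline{X}} = K_{\overline{X}}$ as Weil divisor classes. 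Consequently $\overline{\varphi}$ preserves the reflexive sheaf $\mathcal{O}_{\overline{X}}(mK_{\overline{X}})$, and I would choose $m$ so that $mK_{\overline{X}}$ is Cartier and very ample. Pulling back global sections along the isomorphism $\overline{\varphi}$ realises over the open set where $\overline{\varphi}$ and $\overline{\varphi}^{-1}$ are both defined, and extending them over the codimension $\ge 2$ complement by normality of $\overline{X}$, yields a linear automorphism of $H^{0}(\overline{X}, \mathcal{O}_{\overline{X}}(mK_{\overline{X}}))$. Under the embedding $\overline{X} \hookrightarrow \mathbb{P}^{N}$ defined by $|mK_{\overline{X}}|$ this identifies $\overline{\varphi}$ with the restriction of a projective-linear automorphism preserving $\overline{X}$, so $\overline{\varphi}$ is in fact a morphism, hence an element of $\Aut(\overline{X})$.

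I would then conclude as in Proposition \ref{bsrigid}: since $\overline{\varphi}$ is an automorphism of $\overline{X}$ and $\varphi = \overline{\varphi}|_{X}$ maps $X$ into $X$, the map $\varphi$ is an injective endomorphism of the normal variety $X$; by Theorem \ref{A} it is bijective, and by Proposition \ref{bijiso} it is therefore an automorphism. The Fano case is the anti-ample instance. The main obstacle is the passage carried out in the third paragraph, namely upgrading a pseudo-automorphism that merely preserves the (anti)ample canonical class to a genuinely biregular map. The difficulty is that $\overline{X}$ is only assumed $\mathbb{Q}$-factorial and normal, not smooth or canonical, so one cannot invoke uniqueness of the canonical model directly and must instead argue through the reflexive pluricanonical sheaves $\mathcal{O}_{\overline{X}}(mK_{\overline{X}})$, taking care that $\overline{\varphi}^{*}$ is well defined on their spaces of global sections and that the induced linear map is an isomorphism carrying the image of $\overline{X}$ to itself.
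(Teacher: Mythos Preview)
Your argument is correct, but it follows a genuinely different route from the paper's.

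The paper argues by contradiction at the level of intersection numbers. After observing, as you do, that the $p$- and $q$-exceptional divisors on a common resolution $(\widetilde{X},p,q)$ coincide, it deduces $p^{\ast}K_{\overline{X}}\equiv q^{\ast}K_{\overline{X}}$ numerically. If $\varphi$ were not an automorphism, Remark~\ref{finfib} produces a curve $C\subset X$ contracted by $\varphi$; lifting it to $\widetilde{C}\subset\widetilde{X}$ with $p(\widetilde{C})=\overline{C}$ and $q(\widetilde{C})$ a point, the projection formula gives
\[
(K_{\overline{X}},\overline{C})=(p^{\ast}K_{\overline{X}},\widetilde{C})=(q^{\ast}K_{\overline{X}},\widetilde{C})=(K_{\overline{X}},q_{\ast}\widetilde{C})=0,
\]
contradicting (anti)ampleness. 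No pluricanonical linear system is ever invoked.

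Your approach instead upgrades the pseudo-automorphism $\overline{\varphi}$ to a biregular one directly: from $\overline{\varphi}^{\ast}K_{\overline{X}}\sim K_{\overline{X}}$ you pull back global sections of $\mathcal{O}_{\overline{X}}(mK_{\overline{X}})$ through the codimension-one isomorphism, extend by normality, and read off $\overline{\varphi}$ as the restriction of a linear automorphism under the (anti)canonical embedding. This is the classical ``polarized automorphism'' argument and yields the stronger intermediate statement that every small birational self-map of such an $\overline{X}$ is biregular. The paper's curve-based computation is shorter here and, more importantly, is the template that generalises to Theorem~\ref{main3}, where one must track intersection numbers through a sequence of divisorial contractions and no single polarising line bundle is available; your method does not extend to that setting as directly.
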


\begin{proof}
We take a resolution of indeterminacy of $\overline{\varphi}$, which is a triple $(\widetilde{X},\ p,\ q)$ such that $\widetilde{X}$ is a smooth projective variety, and $p,\ q$ are birational projective morphisms $\widetilde{X} \to \overline{X}$ which satisfy $q=\overline{\varphi} \circ p$. Note that $(\widetilde{X},\ p,\ q)$ is also a resolution of indeterminacy of $\overline{\varphi}^{-1}$. By Lemma \ref{Z} and the assumption that $\codim(\overline{X} \setminus X) \ge 2$, $\overline{\varphi}$ and $\overline{\varphi}^{-1}$ do not contract any divisor. This implies that the set of $p$-exceptional prime divisors coincides with the set of $q$-exceptional prime divisors.

Suppose that $\varphi$ is not an automorphism. Then, by Remark \ref{finfib}, there is a curve $C$ on $X$ such that $\varphi(C)$ is a point. Let $\overline{C}$ be the closure of $C$ in $\overline{X}$, and $\widetilde{C}$ be a curve on $\widetilde{X}$ such that $p(\widetilde{C})=\overline{C}$. Note that $q(\widetilde{C})$ is a point. By pulling back $K_{\overline{X}}$ by $p$ and $q$ respectively, we have the following equations in the N\'{e}ron-Severi group of $\widetilde{X}$.
$$K_{\widetilde{X}}=p^{\ast}K_{\overline{X}}+\sum_i a(\widetilde{D}_i,\ \overline{X})\widetilde{D}_i,$$
$$K_{\widetilde{X}}=q^{\ast}K_{\overline{X}}+\sum_j a(\widetilde{E}_j,\ \overline{X})\widetilde{E}_j.$$
In these equations, $\widetilde{D}_i$ runs over all $p$-exceptional prime divisors, and $\widetilde{E}_j$ runs over all $q$-exceptional prime divisors. Also note that $a(\widetilde{D}_i,\ \overline{X})$ is a rational number called the discrepancy of $\widetilde{D}_i$ over $\overline{X}$. Since the set of $p$-exceptional prime divisors coincides with the set of $q$-exceptional prime divisors, we have $p^{\ast}K_{\overline{X}}=q^{\ast}K_{\overline{X}}$ in the N\'{e}ron-Severi group of $\widetilde{X}$. This gives the following calculation of the intersection number $(K_{\overline{X}},\ \overline{C})$.
$$(K_{\overline{X}},\ \overline{C})=(K_{\overline{X}},\ p_{\ast}\widetilde{C})=(p^{\ast}K_{\overline{X}},\ \widetilde{C})=(q^{\ast}K_{\overline{X}},\ \widetilde{C})=(K_{\overline{X}},\ q_{\ast}\widetilde{C})=0.$$
This contradicts that $K_{\overline{X}}$ is ample or anti-ample.
\end{proof}

\subsection{Proof of Theorem \ref{main3}}
Let $X$ be an open subset of a $\mathbb{Q}$-factorial normal projective variety $\overline{X}$ over $k$. Suppose that $\overline{X}$ has canonical singularities and $\overline{X}$ has a sequence of divisorial contractions $\overline{X}=\overline{X}_0 \to \overline{X}_1 \to \cdots \to \overline{X}_n$. Let $f$ be the morphism $\overline{X}_0 \to \overline{X}_n$, and we set
$$EPD(f)_X=\{an\ exceptional\ prime\ divisor\ \overline{E}\ of\ f\ |\ \overline{E} \cap X \neq \emptyset\}.$$
We regard the endomorphism $\varphi$ of $X$ as a birational automorphism $\overline{\varphi}$ of $\overline{X}$. We also suppose $\Bir(\overline{X}_n)=\Aut(\overline{X}_n)$, thus for any $\overline{E} \in EPD(f)_X$, its strict transformation $\overline{\varphi}_{\ast}\overline{E}$ by $\overline{\varphi}$ is an element of $EPD(f)_X$ again.

Let $\overline{D}_1, \cdots, \overline{D}_m$ be the prime divisors which appear in the irreducible decomposition of $\overline{X} \setminus X$. Note that if $\codim(\overline{X} \setminus X)$ is at least $2$, then we may ignore this notation since such a prime divisor does not exist. 

We take a resolution of indeterminacy $(\widetilde{X},\ p,\ q)$ of $\overline{\varphi}$. As in the proof of Proposition \ref{ample}, $(\widetilde{X},\ p,\ q)$ is also a resolution of indeterminacy of $\overline{\varphi}^{-1}$. Note that the set of $p$-exceptional prime divisors does not necessarily coincides with the set of $q$-exceptional prime divisors since $\overline{\varphi}$ and $\overline{\varphi}^{-1}$ may contract some divisors which are contained in $\overline{X} \setminus X$. However, we can prove the following lemmas.

\begin{lem}\label{Epq}
Let $p^{-1}_{\ast}\overline{D}_i$ and $q^{-1}_{\ast}\overline{D}_i$ denote the birational transformations of $\overline{D}_i$ by $p$ and $q$ respectively. We set
$$P=\{an\ exceptional\ prime\ divisor\ of\ p\} \setminus \{q^{-1}_{\ast}\overline{D}_1, \cdots, q^{-1}_{\ast}\overline{D}_m\},$$
$$Q=\{an\ exceptional\ prime\ divisor\ of\ q\} \setminus \{p^{-1}_{\ast}\overline{D}_1, \cdots, p^{-1}_{\ast}\overline{D}_m\}.$$
Then, we have $P=Q$.
\end{lem}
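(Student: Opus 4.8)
The plan is to reduce the set-theoretic equality $P=Q$ to two purely geometric claims, and then to establish those by a single dimension count. The two claims are: \emph{(I)} $\overline{\varphi}$ does not contract any prime divisor $\overline{D}$ with $\overline{D}\cap X\neq\emptyset$, and \emph{(II)} $\overline{\varphi}^{-1}$ does not contract any prime divisor with $\overline{D}\cap X\neq\emptyset$. Granting these, I would prove $P\subseteq Q$ directly and obtain $Q\subseteq P$ by the symmetric argument (interchanging $p\leftrightarrow q$ and $\overline{\varphi}\leftrightarrow\overline{\varphi}^{-1}$). For $\widetilde{D}\in P$, note that $\widetilde{D}$ is $p$-exceptional, so $p(\widetilde{D})$ has codimension at least $2$; in particular $\widetilde{D}$ is not the strict transform $p^{-1}_{\ast}\overline{D}_i$ of any divisor, so the condition defining $Q$ on the $\overline{D}_i$ is automatic and it remains only to check that $\widetilde{D}$ is $q$-exceptional. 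If it were not, then $q(\widetilde{D})=:\overline{D}'$ would be a prime divisor and $\widetilde{D}=q^{-1}_{\ast}\overline{D}'$; since $\widetilde{D}\notin\{q^{-1}_{\ast}\overline{D}_1,\dots,q^{-1}_{\ast}\overline{D}_m\}$ we would have $\overline{D}'\neq\overline{D}_i$ for all $i$, hence $\overline{D}'\cap X\neq\emptyset$. But $p(\widetilde{D})$ has codimension at least $2$, so $\overline{\varphi}^{-1}$ contracts $\overline{D}'$, contradicting \emph{(II)}. Thus $\widetilde{D}\in Q$.

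Next I would prove \emph{(I)} and \emph{(II)}. For \emph{(I)}, suppose $\overline{\varphi}$ contracts a prime divisor $\overline{D}$ meeting $X$. Since $\codim Y\ge 2$, the divisor $\overline{D}\cap X$ is not contained in $Y$, and after removing from it both $Y$ and the indeterminacy locus of $\overline{\varphi}$ (both of codimension at least $2$) we are left with a dense open $U\subset\overline{D}\cap X$ with $\dim U=\dim X-1$ on which $\varphi$ is an injective morphism with $\varphi(U)\subset\overline{\varphi}(\overline{D})$. An injective morphism does not drop dimension, so $\dim\overline{\varphi}(\overline{D})\ge\dim U=\dim X-1$, contradicting that $\overline{\varphi}(\overline{D})$ has codimension at least $2$. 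For \emph{(II)} I would first upgrade the injectivity of $\varphi$ to a statement about $\overline{\varphi}^{-1}$: by definition $Z=X\setminus\varphi(X\setminus Y)$, so $\varphi$ restricts to a bijective morphism $X\setminus Y\to X\setminus Z$ onto the normal open subset $X\setminus Z$, which is therefore an isomorphism by Proposition \ref{bijiso}. Hence $\overline{\varphi}^{-1}$ restricts to an injective morphism on $X\setminus Z$, and by Lemma \ref{Z} we have $\codim Z\ge 2$. The same dimension count as in \emph{(I)}, now applied to $\overline{\varphi}^{-1}$ with $Z$ in place of $Y$, yields \emph{(II)}.

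The step I expect to require the most care is \emph{(II)}, precisely because the hypotheses are stated only for $\varphi$; the content there is that the inverse map is just as well-behaved, which is exactly what Proposition \ref{bijiso} and Lemma \ref{Z} supply. It is worth emphasizing that this lemma uses neither the canonical singularities nor the divisorial-contraction structure of $\overline{X}$ --- only that $X$ is open in a normal variety and that $\varphi$ is injective outside a codimension-$2$ locus --- so it is really the analogue, in the possibly non-proper setting where boundary divisors $\overline{D}_i$ may be contracted, of the fact used in the proof of Proposition \ref{ample} that $\overline{\varphi}$ and $\overline{\varphi}^{-1}$ contract no divisor.
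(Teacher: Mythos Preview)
Your argument is correct and follows essentially the same route as the paper: take $\widetilde{D}\in P$, suppose it is not $q$-exceptional, deduce that $q(\widetilde{D})$ is a prime divisor meeting $X$, and derive a contradiction from the fact that $\overline{\varphi}^{-1}$ contracts no divisor meeting $X$. The paper simply asserts this last fact (it is implicit in the sentence preceding the lemma, and parallels the use of Lemma~\ref{Z} in Proposition~\ref{ample}), whereas you spell out the justification of \emph{(I)} and \emph{(II)} via the dimension count and Proposition~\ref{bijiso}; this extra care is welcome but does not constitute a different approach.
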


\begin{proof}
First of all, we explain the definitions of $P$ and $Q$. If $\overline{\varphi}$ contracts some $\overline{D}_i$, then $q$ must contract $p^{-1}_{\ast}\overline{D}_i$ by $q=\overline{\varphi} \circ p$. Thus, the set of $q$-exceptional prime divisors contains birational transformations by $p$ of prime divisors which are contracted by $\overline{\varphi}$. Birational transformations of prime divisors by $p$ cannot be $p$-exceptional prime divisors, thus we have defined $Q$ as above, and it is similar for $P$.

Take $\widetilde{D} \in P$, and suppose it is not a $q$-exceptional prime divisor. Then, $q(\widetilde{D})$ is a prime divisor intersecting $X$, thus it is not contracted by $\overline{\varphi}^{-1}$. However, this contradicts that $\widetilde{D}$ is a $p$-exceptional prime divisor by $p=\overline{\varphi}^{-1} \circ q$. Since it is clear that $\widetilde{D}$ is not a birational transformation of prime divisor by $p$, we have $P \subset Q$. It is similar for $Q \subset P$.
\end{proof}

\begin{lem}\label{Ephi}
Let $A$ be the set of $\overline{\varphi}$-exceptional prime divisors, and $B$ be the set of $\overline{\varphi}^{-1}$-exceptional prime divisors. Then, we have \#A=\#B.
\end{lem}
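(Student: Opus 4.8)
The plan is to push everything to the common resolution $\widetilde{X}$ and reduce the equality $\#A=\#B$ to a comparison of the numbers of exceptional prime divisors of $p$ and of $q$. Write $\mathcal{P}$ for the set of $p$-exceptional prime divisors on $\widetilde{X}$ and $\mathcal{Q}$ for the set of $q$-exceptional prime divisors on $\widetilde{X}$. First I would set up two bijections via strict transforms. Since $q=\overline{\varphi}\circ p$, for a prime divisor $\overline{E}$ on $\overline{X}$ one has $q(p^{-1}_{\ast}\overline{E})=\overline{\varphi}(\overline{E})$, so $p^{-1}_{\ast}\overline{E}$ is $q$-exceptional exactly when $\overline{\varphi}$ contracts $\overline{E}$, i.e. when $\overline{E}\in A$. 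As $\overline{E}\mapsto p^{-1}_{\ast}\overline{E}$ identifies the prime divisors on $\overline{X}$ with the non-$p$-exceptional prime divisors on $\widetilde{X}$, it restricts to a bijection $A\cong\mathcal{Q}\setminus\mathcal{P}$. Using $p=\overline{\varphi}^{-1}\circ q$ in the same way gives $B\cong\mathcal{P}\setminus\mathcal{Q}$ via $\overline{E}\mapsto q^{-1}_{\ast}\overline{E}$.

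Consequently $\#A=\#(\mathcal{Q}\setminus\mathcal{P})=\#\mathcal{Q}-\#(\mathcal{P}\cap\mathcal{Q})$ and $\#B=\#(\mathcal{P}\setminus\mathcal{Q})=\#\mathcal{P}-\#(\mathcal{P}\cap\mathcal{Q})$, so the desired equality $\#A=\#B$ is equivalent to $\#\mathcal{P}=\#\mathcal{Q}$. The heart of the argument is therefore to show that $p$ and $q$ have the same number of exceptional prime divisors. For this I would invoke the standard fact that for a projective birational morphism $f\colon Y\to Z$ between $\mathbb{Q}$-factorial normal projective varieties, the classes of the $f$-exceptional prime divisors are linearly independent in $N^1(Y)_{\mathbb{R}}$ modulo $f^{\ast}N^1(Z)_{\mathbb{R}}$ and, together with $f^{\ast}N^1(Z)_{\mathbb{R}}$, span it (here $N^1$ denotes the N\'{e}ron--Severi group with real coefficients); hence the number of $f$-exceptional prime divisors equals $\rho(Y)-\rho(Z)$, which depends only on $Y$ and $Z$. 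Applying this to $p$ and to $q$, both projective birational morphisms from the smooth, hence $\mathbb{Q}$-factorial, projective variety $\widetilde{X}$ to the $\mathbb{Q}$-factorial projective variety $\overline{X}$, yields $\#\mathcal{P}=\rho(\widetilde{X})-\rho(\overline{X})=\#\mathcal{Q}$, and the lemma follows. This is consistent with Lemma \ref{Epq}: since every $\overline{\varphi}$- or $\overline{\varphi}^{-1}$-contracted divisor lies in $\overline{X}\setminus X$ (as $\varphi$ contracts no divisor meeting $X$), the divisors $q^{-1}_{\ast}\overline{D}_i$ lying in $\mathcal{P}$ are exactly $\mathcal{P}\setminus\mathcal{Q}$ and symmetrically on the other side, so that $P=Q=\mathcal{P}\cap\mathcal{Q}$.

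I expect the one genuine point requiring care to be the independence half of the N\'{e}ron--Severi count, namely that a numerically trivial combination $\sum a_i E_i$ of $f$-exceptional divisors has all $a_i=0$; this is exactly where the negativity lemma, and hence the $\mathbb{Q}$-factoriality of the base $\overline{X}$ (a running hypothesis of this section), is used. Granting that, the remaining steps are bookkeeping with strict transforms, and no further input about the specific geometry of $\overline{\varphi}$ is needed.
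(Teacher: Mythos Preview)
Your proof is correct but takes a different route from the paper's. The paper argues directly on $\overline{X}$ without lifting to $\widetilde{X}$: since $A$ and $B$ are both contained in the finite set $\{\overline{D}_1,\ldots,\overline{D}_m\}$ of boundary divisors, and $\overline{\varphi}^{-1}\circ\overline{\varphi}=\id_{\overline{X}}$, the strict-transform map $\overline{\varphi}_*$ injects $\{\overline{D}_1,\ldots,\overline{D}_m\}\setminus A$ into $\{\overline{D}_1,\ldots,\overline{D}_m\}\setminus B$, giving $m-\#A\le m-\#B$, i.e.\ $\#A\ge\#B$; the reverse inequality is symmetric. This is a pure counting argument and uses neither $\mathbb{Q}$-factoriality nor Picard numbers. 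Your approach, by contrast, passes to the resolution $\widetilde{X}$ and invokes the identity (number of $f$-exceptional prime divisors) $=\rho(Y)-\rho(Z)$ for projective birational morphisms between $\mathbb{Q}$-factorial projective varieties, which is a heavier tool but one that meshes naturally with the intersection-theoretic methods of the surrounding section and makes the link with Lemma~\ref{Epq} explicit. Both arguments are valid; the paper's is shorter and requires nothing beyond the finiteness of the boundary, while yours would still work even if one did not know in advance that the contracted divisors all lie in $\overline{X}\setminus X$.
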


\begin{proof}
Note that $A$ and $B$ is contained in $\{\overline{D}_1, \cdots, \overline{D}_m\}$. By $\overline{\varphi}^{-1} \circ \overline{\varphi}=\id_{\overline{X}}$, for two distinct elements $\overline{D}, \overline{E}$ of $\{\overline{D}_1, \cdots, \overline{D}_m\} \setminus A$, their strict transformations $\overline{\varphi}_{\ast}\overline{D}$ and $\overline{\varphi}_{\ast}\overline{E}$ by $\overline{\varphi}$ are two distinct elements of $\{\overline{D}_1, \cdots, \overline{D}_m\} \setminus B$. This implies $\#A \ge \#B$. It is similar for $\#B \ge \#A$.
\end{proof}

For easy notation, we may suppose $A=\{\overline{D}_1, \cdots, \overline{D}_k\}$. Note that if $A=\emptyset$, then we set $k=0$. Thus, we have
$$\{an\ exceptional\ prime\ divisor\ of\ q\} \setminus Q=\{p^{-1}_{\ast}\overline{D}_1, \cdots, p^{-1}_{\ast}\overline{D}_k\}.$$
Also, we can write
$$\{an\ exceptional\ prime\ divisor\ of\ p\} \setminus P=\{q^{-1}_{\ast}\overline{E}_1, \cdots, q^{-1}_{\ast}\overline{E}_k\},$$
where $\{\overline{E}_1, \cdots, \overline{E}_k\} \subset \{\overline{D}_1, \cdots, \overline{D}_m\}$.

By pulling back $K_{\overline{X}}$ by $p$ and $q$, we have the following equations in the N\'{e}ron-Severi group of $\widetilde{X}$.
$$K_{\widetilde{X}}=p^{\ast}K_{\overline{X}}+\sum_i a(\widetilde{D}_i,\ \overline{X})\widetilde{D}_i,$$
$$K_{\widetilde{X}}=q^{\ast}K_{\overline{X}}+\sum_j a(\widetilde{E}_j,\ \overline{X})\widetilde{E}_j.$$
In these equations, $\widetilde{D}_i$ runs over all $p$-exceptional prime divisors, and $\widetilde{E}_j$ runs over all $q$-exceptional prime divisors. By Lemma \ref{Epq}, we have
$$p^{\ast}K_{\overline{X}}-q^{\ast}K_{\overline{X}}=\sum_{j=1}^k a(p^{-1}_{\ast}\overline{D}_j,\ \overline{X})p^{-1}_{\ast}\overline{D}_j-\sum_{i=1}^k a(q^{-1}_{\ast}\overline{E}_i,\ \overline{X})q^{-1}_{\ast}\overline{D}_i$$
in the N\'{e}ron-Severi group of $\widetilde{X}$.

From now on, we suppose that $\varphi$ is not an automorphism. The following propositions are essential for proof of Theorem \ref{main3}.

\begin{prop}\label{nKn}
By Remark \ref{finfib}, there is a curve $C$ on $X$ such that $\varphi(C)$ is a point. Let $\overline{C}$ be the closure of $C$ in $\overline{X}$. Then, we have $(K_{\overline{X}},\ \overline{C}) \ge 0$.
\end{prop}

\begin{proof}
Let $\widetilde{C}$ be a curve on $\widetilde{X}$ such that $p(\widetilde{C})=\overline{C}$. Note that $q(\widetilde{C})$ is a point of $X$. Since we assume that $\overline{X}$ has canonical singularities, and we have $(p^{\ast}K_{\overline{X}},\ \widetilde{C})=(K_{\overline{X}},\ \overline{C})$ and $(q^{\ast}K_{\overline{X}},\ \widetilde{C})=0$ by projection formula, it suffices to show $(p^{-1}_{\ast}\overline{D}_j, \widetilde{C}) \ge 0$ and $(q^{-1}_{\ast}\overline{E}_i, \widetilde{C}) \le 0$ for any $i,\ j \in \{1, \cdots, k\}$.

For any $j \in \{1, \cdots, k\}$, since $\overline{C}$ is not contained in $\overline{D}_j$, we have $\widetilde{C} \not\subset p^{-1}_{\ast}\overline{D}_j$. This implies $(p^{-1}_{\ast}\overline{D}_j, \widetilde{C}) \ge 0$.

For any $i \in \{1, \cdots, k\}$, since $q(\widetilde{C}) \cap  \overline{E}_i$ is empty, we have $\widetilde{C} \cap q^{-1}_{\ast}\overline{E}_i=\emptyset$. This implies $(q^{-1}_{\ast}\overline{E}_i, \widetilde{C})=0$. Note that we need only non-positiveness of the intersection numbers here, but we need also vanishing of the intersection numbers later.
\end{proof}

\begin{prop}\label{nEp}
For any $\overline{E} \in EPD(f)_X$, we have $(\overline{E},\ \overline{C}) \le 0$.
\end{prop}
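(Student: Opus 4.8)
The plan is to run the intersection-number computation of Proposition \ref{nKn} with $\overline{E}$ in place of $K_{\overline{X}}$, using crucially that $\overline{E}':=\overline{\varphi}_{\ast}\overline{E}$ again lies in $EPD(f)_X$. Since $\overline{\varphi}$ carries $\overline{E}$ birationally onto $\overline{E}'$, on $\widetilde{X}$ the strict transforms agree: $p^{-1}_{\ast}\overline{E}=q^{-1}_{\ast}\overline{E}'$. As $\overline{X}$ is $\mathbb{Q}$-factorial, $\overline{E}$ and $\overline{E}'$ are $\mathbb{Q}$-Cartier, so by the projection formula and $p_{\ast}\widetilde{C}=\overline{C}$ we get $(\overline{E},\overline{C})=(p^{\ast}\overline{E},\widetilde{C})$, whereas $(q^{\ast}\overline{E}',\widetilde{C})=(\overline{E}',q_{\ast}\widetilde{C})=0$ because $q(\widetilde{C})$ is a point. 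Subtracting, I reduce the statement to evaluating $(p^{\ast}\overline{E}-q^{\ast}\overline{E}',\widetilde{C})$, where the divisor $p^{\ast}\overline{E}-q^{\ast}\overline{E}'$ is supported on exceptional divisors of $p$ and $q$.

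To identify this divisor I would use the pull-back of the push-forward $\overline{\varphi}^{\ast}\overline{E}'=p_{\ast}q^{\ast}\overline{E}'$. Since $\overline{\varphi}^{-1}_{\ast}\overline{E}'=\overline{E}$ and the correction coming from the $\overline{\varphi}$-exceptional locus is effective (a pull-back of an effective divisor is effective), one has
$$\overline{\varphi}^{\ast}\overline{E}'=\overline{E}+\sum_l c_l\overline{D}_l,\qquad c_l\ge 0,$$
where the $\overline{D}_l$ are exactly the $\overline{\varphi}$-exceptional prime divisors, all of which lie among $\overline{D}_1,\dots,\overline{D}_m\subset\overline{X}\setminus X$ by the discussion before Lemma \ref{Ephi}. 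Comparing coefficients of $p^{\ast}\overline{E}$ and $q^{\ast}\overline{E}'$ on each exceptional divisor and pairing with $\widetilde{C}$, I would invoke the two sign facts already proved inside Proposition \ref{nKn}, namely $(q^{-1}_{\ast}\overline{E}_i,\widetilde{C})=0$ (as $q(\widetilde{C})\in X$ avoids the boundary divisor $\overline{E}_i$) and $(p^{-1}_{\ast}\overline{D}_j,\widetilde{C})\ge 0$ (as $\overline{C}\not\subset\overline{D}_j$). The expected outcome of this bookkeeping is the clean identity
$$(\overline{E},\overline{C})=-\sum_l c_l(\overline{D}_l,\overline{C}),$$
and since $c_l\ge 0$ while $\overline{C}\not\subset\overline{D}_l$ forces $(\overline{D}_l,\overline{C})\ge 0$, this yields $(\overline{E},\overline{C})\le 0$.

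The delicate point, and the one I expect to be the main obstacle, is the contribution of the \emph{common} exceptional divisors, those in the set $P=Q$ of Lemma \ref{Epq}. For $K_{\overline{X}}$ these cancel in $p^{\ast}K_{\overline{X}}-q^{\ast}K_{\overline{X}}$ because a divisor's discrepancy over $\overline{X}$ is intrinsic; for $\overline{E}$ there is no such intrinsic cancellation, and the coefficients of a common divisor $\widetilde{D}$ in $p^{\ast}\overline{E}$ and in $q^{\ast}\overline{E}'$ differ by a combination $\sum_l c_l$ times the coefficient of $\widetilde{D}$ in $p^{\ast}\overline{D}_l$, while $(\widetilde{D},\widetilde{C})$ carries no a priori sign. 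The heart of the argument is therefore to show these residual common-divisor terms reorganize: summing the $P$-part of $p^{\ast}\overline{D}_l$ against $\widetilde{C}$ and using $(p^{\ast}\overline{D}_l,\widetilde{C})=(\overline{D}_l,\overline{C})$ together with $(q^{-1}_{\ast}\overline{E}_i,\widetilde{C})=0$ produces precisely $(\overline{D}_l,\overline{C})-(p^{-1}_{\ast}\overline{D}_l,\widetilde{C})$, and the leftover $(p^{-1}_{\ast}\overline{D}_l,\widetilde{C})$ exactly cancels the contribution of the $q$-only exceptional divisors $p^{-1}_{\ast}\overline{D}_j$. Equivalently, one checks that the $p$-exceptional part of $p^{\ast}\overline{\varphi}^{\ast}\overline{E}'$ meets $\widetilde{C}$ in zero. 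Before writing this up I would verify carefully the effectivity $c_l\ge 0$ and that the $\overline{\varphi}$-exceptional locus is contained in $\overline{X}\setminus X$, as both are used in an essential way.
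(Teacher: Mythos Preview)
Your approach has a genuine gap at exactly the point you flag as ``delicate,'' and the reorganization you sketch does not actually close it. Writing $p^{\ast}\overline{E}=p^{-1}_{\ast}\overline{E}+\sum_r a_r\widetilde{F}_r+\sum_i a'_i\,q^{-1}_{\ast}\overline{E}_i$ and $q^{\ast}\overline{E}'=p^{-1}_{\ast}\overline{E}+\sum_r b_r\widetilde{F}_r+\sum_j b'_j\,p^{-1}_{\ast}\overline{D}_j$ (with $\widetilde{F}_r$ running over $P=Q$), the difference paired with $\widetilde{C}$ is
\[
(\overline{E},\overline{C})=\sum_r(a_r-b_r)(\widetilde{F}_r,\widetilde{C})-\sum_j b'_j(p^{-1}_{\ast}\overline{D}_j,\widetilde{C}).
\]
Here $a_r,b_r$ are just multiplicities of the pullback of an \emph{arbitrary} effective divisor, not discrepancies; there is no intrinsic reason for $a_r=b_r$, and $(\widetilde{F}_r,\widetilde{C})$ carries no sign. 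Rewriting via $q^{\ast}\overline{E}'=p^{\ast}(\overline{\varphi}^{\ast}\overline{E}')+F$ with $F$ $p$-exceptional does not help: the claim that the $p$-exceptional part of $p^{\ast}\overline{\varphi}^{\ast}\overline{E}'$ meets $\widetilde{C}$ in zero is unfounded (it equals $(\overline{\varphi}^{\ast}\overline{E}',\overline{C})-(p^{-1}_{\ast}\overline{E},\widetilde{C})-\sum_j c_j(p^{-1}_{\ast}\overline{D}_j,\widetilde{C})$, which has no reason to vanish), and even granting it one is still left with $\sum_r b_r(\widetilde{F}_r,\widetilde{C})$, again of unknown sign.

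The paper's proof avoids this by two moves you are missing. First, since the conclusion $(\overline{E},\overline{C})\le 0$ involves neither $\varphi$ nor its resolution, and $\overline{C}$ is contracted by every iterate of $\varphi$, one may replace $\varphi$ by an iterate so that $\overline{\varphi}_{\ast}\overline{E}=\overline{E}$; then $p^{-1}_{\ast}\overline{E}=q^{-1}_{\ast}\overline{E}$. Second, one works with the log pair $(\overline{X},\overline{E})$ rather than with $\overline{E}$ alone: pulling back $K_{\overline{X}}+\overline{E}$ by $p$ and by $q$, the coefficients of each $\widetilde{F}_r\in P=Q$ are the \emph{log discrepancies} $a(\widetilde{F}_r,\overline{X},\overline{E})$, which depend only on the divisorial valuation and hence agree on both sides. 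This is precisely the cancellation that fails for raw pullback multiplicities. One then compares with the analogous formula for $K_{\overline{X}}$ and uses $a(\,\cdot\,,\overline{X},\overline{E})\le a(\,\cdot\,,\overline{X})$ together with $(p^{-1}_{\ast}\overline{D}_j,\widetilde{C})\ge 0$, $(q^{-1}_{\ast}\overline{E}_i,\widetilde{C})=0$ to get $(K_{\overline{X}}+\overline{E},\overline{C})\le (K_{\overline{X}},\overline{C})$, i.e.\ $(\overline{E},\overline{C})\le 0$.
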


\begin{proof}
By the assumptions of Theorem \ref{main3}, the operation of taking strict transformation by $\overline{\varphi}$ acts $EPD(f)_X$ as a permutation. Thus, we assume that the action is trivial by replacing $\varphi$ with some iterate.

Take $\overline{E} \in EPD(f)_X$. By pulling back $K_{\overline{X}}+\overline{E}$ by $p$ and $q$ respectively, we have the following equations in the N\'{e}ron-Severi group of $\widetilde{X}$.
$$K_{\widetilde{X}}+p^{-1}_{\ast}\overline{E}=p^{\ast}(K_{\overline{X}}+\overline{E})+\sum_i a(\widetilde{D}_i,\ \overline{X},\ \overline{E})\widetilde{D}_i,$$
$$K_{\widetilde{X}}+q^{-1}_{\ast}\overline{E}=q^{\ast}(K_{\overline{X}}+\overline{E})+\sum_j a(\widetilde{E}_j,\ \overline{X},\ \overline{E})\widetilde{E}_j.$$
In these equations, $\widetilde{D}_i$ runs over all $p$-exceptional prime divisors, and $\widetilde{E}_j$ runs over all $q$-exceptional prime divisors. Also note that $a(\widetilde{D}_i,\ \overline{X},\ \overline{E})$ is a rational number called the log discrepancy of $\widetilde{D}_i$ over $(\overline{X},\ \overline{E})$. We have replaced $\varphi$ by some iterate, we may assume $p^{-1}_{\ast}\overline{E}=q^{-1}_{\ast}\overline{E}$, thus we have
$$p^{\ast}(K_{\overline{X}}+\overline{E})-q^{\ast}(K_{\overline{X}}+\overline{E})=\sum_{j=1}^k a(p^{-1}_{\ast}\overline{D}_j,\ \overline{X},\ \overline{E})p^{-1}_{\ast}\overline{D}_j-\sum_{i=1}^k a(q^{-1}_{\ast}\overline{E}_i,\ \overline{X},\ \overline{E})q^{-1}_{\ast}\overline{D}_i.$$
We know $(p^{\ast}(K_{\overline{X}}+\overline{E}),\ \widetilde{C})=(K_{\overline{X}}+\overline{E},\ \overline{C})$, $(q^{\ast}(K_{\overline{X}}+\overline{E}),\ \widetilde{C})=0$, $a(p^{-1}_{\ast}\overline{D}_j,\ \overline{X},\ \overline{E}) \ge 0$, $(p^{-1}_{\ast}\overline{D}_j,\ \widetilde{C}) \ge 0$ for any $j \in \{1, \cdots, k\}$, and $(q^{-1}_{\ast}\overline{E}_i, \widetilde{C})=0$ for any $i \in \{1, \cdots, k\}$, thus we have
$$(K_{\overline{X}}+\overline{E},\ \overline{C})=\sum_{j=1}^k a(p^{-1}_{\ast}\overline{D}_j,\ \overline{X},\ \overline{E})(p^{-1}_{\ast}\overline{D}_j,\ \widetilde{C}).$$
Since \cite[Lemma 2.27]{KM} tells us $a(p^{-1}_{\ast}\overline{D}_j,\ \overline{X},\ \overline{E}) \le a(p^{-1}_{\ast}\overline{D}_j,\ \overline{X})$ for any $j \in \{1, \cdots, k\}$, we have
\begin{eqnarray*}
(K_{\overline{X}}+\overline{E},\ \overline{C})&=&\sum_{j=1}^k a(p^{-1}_{\ast}\overline{D}_j,\ \overline{X},\ \overline{E})(p^{-1}_{\ast}\overline{D}_j,\ \widetilde{C})\\
&\le& \sum_{j=1}^k a(p^{-1}_{\ast}\overline{D}_j,\ \overline{X})(p^{-1}_{\ast}\overline{D}_j,\ \widetilde{C})=(K_{\overline{X}},\ \overline{C}).
\end{eqnarray*}
Thus, we have
$$
(K_{\overline{X}},\ \overline{C}) \ge (K_{\overline{X}}+\overline{E},\ \overline{C})=(K_{\overline{X}},\ \overline{C})+(\overline{E},\ \overline{C}).$$
This implies $(\overline{E},\ \overline{C}) \le 0$.
\end{proof}

\begin{rmk}\label{ind}
We clarify conditions which we exactly need for proving Proposition \ref{nKn} and \ref{nEp}. In fact, we do not need that $\overline{\varphi}$ is defined on a dense open subset of $\overline{C}$. What we need for proving Proposition \ref{nKn} are that $p(\widetilde{C})=\overline{C}$, $q(\widetilde{C})$ is a point, and $\overline{\varphi}$, $\overline{\varphi}^{-1}$ do not contract any divisors intersecting $X$. For proving Proposition \ref{nEp}, we also need that the exceptional prime divisor is invariant for taking strict transformation by some iterate of $\overline{\varphi}$.
\end{rmk}

Let $f_1$ be the morphism $\overline{X}_0 \to \overline{X}_1$, and Let $\overline{E}$ be the exceptional prime divisor of $f_1$. By Proposition \ref{nKn}, $\overline{C}$ is not contracted by $f_1$, thus we can take a curve $\overline{C}_1=f_1(\overline{C})$ in $\overline{X}_1$. We show $(K_{\overline{X}_1},\ \overline{C}_1) \ge 0$.

If $\overline{E}$ is an element of $EPD(f)_X$, then by pulling back $K_{\overline{X}_1}$ by $f_1$, we have
$$K_{\overline{X}_0}=(f_1)^{\ast}K_{\overline{X}_1}+a(\overline{E},\ \overline{X}_1)\overline{E}$$
in the N\'{e}ron-Severi group of $\overline{X}_0$. This implies
$$(K_{\overline{X}_1},\ \overline{C}_1)=(K_{\overline{X}_1},\ (f_1)_{\ast}\overline{C})=((f_1)^{\ast}K_{\overline{X}_1},\ \overline{C})=(K_{\overline{X}_0},\ \overline{C})-a(\overline{E},\ \overline{X}_1)((\overline{E},\ \overline{C}) \ge 0$$
by Proposition \ref{nKn}, \ref{nEp}, and projection formula.

If $\overline{E}$ is not an element of $EPD(f)_X$, then by Proposition \ref{nKn} and Remark \ref{ind}, we can show $(K_{\overline{X}_1},\ \overline{C}_1) \ge 0$.

Thus, we can take a curve $\overline{C}_2=f_2(\overline{C}_1)$ in $\overline{X}_2$, where $f_2$ is the morphism $\overline{X}_1 \to \overline{X}_2$. We repeat this argument again and again, and finally, we can take a curve $\overline{C}_n=f(\overline{C})$ in $\overline{X}_n$. Let $\overline{\varphi}_n$ be the birational automorphism of $\overline{X}_n$ which satisfies $\overline{\varphi}_n \circ f=f \circ \overline{\varphi}$. Since we suppose $\Bir(\overline{X}_n)=\Aut(\overline{X}_n)$, $\overline{\varphi}_n$ is actually an automorphism of $\overline{X}_n$, in particular, it does not contract $\overline{C}_n$. Hence, $\overline{C}$ is not contracted by $\overline{\varphi}_n \circ f$, but this contradicts that $\overline{C}$ is contracted by $f \circ \overline{\varphi}$.

\section*{Acknowledgements}
The author thanks Seidai Yasuda, my supervisor, for his invaluable guidance. The author also thanks Hisato Matsukawa for his advice on the content of this paper. This work was supported by JST SPRING, Grant Number JPMJSP2119.


\begin{thebibliography}{99}
\bibitem{A} T. Asano, {\it ``On Miyanishi conjecture for quasi-projective varieties''}, arXiv:2505.12416v1.

\bibitem{Ax} J. Ax, {\it ``Injective endomorphisms of varieties and schemes''}, Pacific J. Math., Vol. 31, 1--7, 1969.

\bibitem{BD} I. Biswas and N. Das, {\it ``On the injective self-maps of algebraic varieties''}, to appear in J. Pure Appl. Algebra, Vol. 229, Issue 6, 2025.

\bibitem{Das} N. Das, {\it ``On endomorphisms of varieties''}, International Mathematics Research Notices, Vol. 2022, No. 22, 17534--17545, 2022.

\bibitem{Open} G. Freudenburg and P. Russell, {\it ``Open problems in affine algebraic geometry''}, Contemporary Math., Vol. 369, 1--30, 2005.

\bibitem{Har} R. Hartshorne, {\it ``Algebraic Geometry''}, Springer-Verlag, 1977.

\bibitem{Kal} S. Kaliman, {\it ``On a theorem of Ax''}, Proc. Amer. Math. Soc., Vol. 133, 975--977, 2004.

\bibitem{KM} J. Koll\'{a}r and S. Mori, {\it ``Birational Geometry of Varieties''}, Cambridge University Press, 1998. 

\bibitem{Mum} D. Mumford, {\it ``The Red Book of Varieties and Schemes (Second, Expanded Edition)''}, Springer-Verlag, 1999.
\end{thebibliography}
\end{document}